\long\def\symbolfootnote[#1]#2{\begingroup%
\def\thefootnote{\fnsymbol{footnote}}\footnote[#1]{#2}\endgroup}
\def\XXint#1#2#3{{\setbox0=\hbox{$#1{#2#3}{\int}$ }
\vcenter{\hbox{$#2#3$ }}\kern-.6\wd0}}
\long\def\symbolfootnote[#1]#2{\begingroup%
\def\thefootnote{\fnsymbol{footnote}}\footnote[#1]{#2}\endgroup}
\numberwithin{komcounter}{section}
\theoremstyle{plain}
\newtheorem{theorem}{Theorem}[section]
\newtheorem{prop}[theorem]{Proposition}
\newtheorem{lemma}{Lemma}[section]
\newtheorem{corol}{Corollary}[theorem]
\theoremstyle{definition}
\newtheorem*{remark}{\textnormal{\textbf{Remark}}}
\theoremstyle{remark}
\theoremstyle{definition}
\title{\bf Symmetry and compact embeddings for critical exponents in metric-measure spaces}
\author{ Micha{\l} Gaczkowski$^1$, Przemys{\l}aw G\'{o}rka$^1$\footnote{Email: p.gorka@mini.pw.edu.pl} and Daniel J. Pons$^2$\footnote{Email: dpons@unab.cl ; pons.dan@gmail.com } \\
\small{$^{1}$Department of Mathematics and Information Sciences,}\\
\small{Warsaw University of Technology,}\\
\small{Ul. Koszykowa 75, 00-662 Warsaw, Poland.}\\
\small{$^{2}$Facultad de Ciencias Exactas, Departamento de Matem\'aticas,}\\
\small{Universidad Andres Bello,}\\
\small{Rep\'ublica 498, Santiago, Chile.}}
\begin{document}
\maketitle
\begin{abstract}
We obtain a compact Sobolev embedding for $H$-invariant functions in compact metric-measure spaces, where $H$ is a subgroup of the measure preserving bijections. In Riemannian manifolds, $H$ is a subgroup of the  volume preserving diffeomorphisms: a compact embedding for the critical exponents follows. The results can be viewed as an extension of Sobolev embeddings of functions invariant under isometries in compact manifolds. 
\end{abstract}
\bigskip

\noindent 
{\bf Keywords}: Sobolev spaces, metric-measure spaces, compact embedding
\medskip

\noindent
\emph{Mathematics Subject Classification (2010):} 46E35; 30L99.
\section{Introduction}

Arising in the Calculus of Variations and PDE's,
the study of Sobolev spaces in Euclidean domains, and the embeddings between them, has been an active area of research for more than a century (see \cite{Adams} for the classical results, and \cite{naumann} for an overview on history). In the last fifty years, motivated by problems in Geometric Analysis, Physics and Topology, those studies have been generalized to functions on manifolds, with the extension to sections of vector bundles over those spaces, see \cite{aubin, Hawking, hebey, lawson, palais}. More recently, the study of metric-measure spaces\footnote{See \cite{gromov} for an interesting perspective.} demands, whenever it is possible, similar studies in this context, see the books \cite{Ambrosio, Heinonen, H}. \\

A fundamental ingredient in Sobolev spaces is the the concept of weak or distributional gradient. In metric-measure spaces there are at least two notions that provide a valid generalization of the usual gradient in $\mathbb{R}^n$: 
\begin{itemize}
\item An upper gradient, see \cite{Cheeger, H}.
\item The one used in this work, nowadays called a Haj\l{}asz gradient, see \cite{Ambrosio, Hajlasz} and Section \ref{Preliminaries} below.
\end{itemize}
Both notions of gradient have advantages and disadvantages with respect to each other, see \cite{Heinonen, H, JSYY}.\\

If $X$ is a set, $\mu$ is a measure on $X$, and $1 \leq p < \infty$, denote by $L^p(X,\mu)$ the vector space of $\mu$-measurable functions such that
\[
\| f \|_{L^p(X,\mu)} := (\ \int_X | f(x) |^p d \mu(x)\ )^{1/p}
\] 
is finite. In particular, if $X$ is either a bounded domain in $\mathbb{R}^n$ or a compact Riemannian $n$-manifold, with $\mu = V_g$  the volume measure associated to the Euclidean or Riemannian line element $g$, respectively, $L^{p}_1(X,\mu)$ refers to the subspace of $L^p(X,\mu)$ made up of those functions  
such that the norm of their distributional gradient (with respect to $g$) is in $L^p(X,\mu)$. In those cases
one has the embedding 
\[ L^{p}_1(X,\mu) \hookrightarrow L^q(X,\mu) 
\] 
whenever $p \leq q \leq p^{\ast} := n p / (n-p)$, where $1 \leq p < n$. If $q < p^{\ast}$, the embedding is compact, and one writes
\begin{equation}   \label{comp-emb-1}
L^{p}_1(X,\mu) \hookrightarrow \hookrightarrow L^q(X,\mu) ,
\end{equation}
see \cite{Adams, aubin, hebey}.
The non-compactness of the embedding in the limit case $q = p^{\ast}$
is a phenomena that arises from sequences of transformations that induce substantial changes in the functions, transformations that nonetheless leave the norm of functions unchanged. With such information, it is tempting to look for subspaces of $L^{p}_1(X,\mu)$ whose elements are invariant under an appropriate subgroup of 
$\text{Diff}(X)$, and see if the compact embedding (\ref{comp-emb-1}), when restricted to these subspaces, can be extended to higher values of $q$: let $H$ be a subgroup of $\text{Diff}(X)$, and denote by 
$L^{p}_{1,H}(X,\mu)$ the subspace of $L^{p}_1(X,\mu)$ made up
of $H$-invariant functions.\\

The best result in this context is due to E. Hebey and M. Vaugon, who consider $H$ as being a subgroup of $\text{Isom}(X,g)$, the group of isometries of $(X,g)$:

%%%%%%Hebey-Vaugon%%%%%%%%%%%%%%%

\begin{theorem} (Hebey-Vaugon \cite{hebey-vaugon}, also Theorem 9.1 in \cite{hebey})   \label{heb-vaug}
Suppose $(X,g)$ is a compact Riemannian $n$-manifold, and $H$ is  a compact subgroup of $\text{Isom}(X,g)$. If $H(x)$ denotes the orbit of the point $x$ under the action of $H$, require that $H(x)$ is uncountable for every $x$ in $X$. If  
$k := \text{min}\ \{\ \text{dim}\ H(x) :  x\in X\    \}$, then
\begin{equation}    \label{comp-emb-2}
L^{p}_{1,H}(X,V_g) \hookrightarrow \hookrightarrow L^q(X,V_g)
\end{equation}
whenever
$1 \leq p < n-k$ and $1 \leq q < \frac{(n-k) p}{n-k-p}$.
\end{theorem}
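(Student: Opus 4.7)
The plan is to prove the theorem in two main stages: first, establish the refined continuous Sobolev embedding $L^{p}_{1,H}(X,V_g) \hookrightarrow L^{q^{\ast}}(X,V_g)$ with critical exponent $q^{\ast} := (n-k)p/(n-k-p)$, and then derive compactness for $q<q^{\ast}$ by interpolating this with a standard Rellich--Kondrachov result.

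For the continuous embedding, I would exploit the orbit structure of the $H$-action. Since $H$ is a compact Lie subgroup of $\mathrm{Isom}(X,g)$, each orbit $H(x)$ is a compact embedded submanifold of dimension $d_x\ge k$. Set $\Omega_k := \{x \in X : d_x = k\}$; this is a nonempty open set, and the slice theorem provides, around each $x\in X$, an $H$-invariant tubular neighborhood diffeomorphic to a twisted product $H \times_{H_x} S_x$, where $S_x$ is a disc of dimension $n-d_x$ transverse to the orbit. Any $H$-invariant $u$ is determined on such a tube by its restriction to $S_x$. I would cover $X$ by finitely many such tubes (by compactness of $X$) and, on each tube, reduce the integral of $|u|^{q^{\ast}}$ and of $|\nabla u|^p$ to integrals over $S_x$ weighted by the volume of the orbits. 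Because $\dim S_x \le n-k$, the classical Euclidean Sobolev inequality on the slice produces a bound with critical exponent $q^{\ast}$ rather than the standard $p^{\ast}=np/(n-p)$; summing over the cover with an $H$-invariant partition of unity yields the global inequality.

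Once the continuous embedding into $L^{q^{\ast}}$ is in hand, compactness for $p\le q<q^{\ast}$ follows from a standard interpolation argument. By classical Rellich--Kondrachov, $L^{p}_{1}(X,V_g)\hookrightarrow\hookrightarrow L^{p}(X,V_g)$, and for $p\le q<q^{\ast}$ Hölder gives $\|u\|_{L^q}\le \|u\|_{L^p}^{1-\theta}\|u\|_{L^{q^{\ast}}}^{\theta}$ for the appropriate $\theta\in[0,1)$. Applied to a sequence bounded in $L^{p}_{1,H}$, strong convergence in $L^p$ together with uniform boundedness in $L^{q^{\ast}}$ forces convergence in $L^q$. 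The case $1\le q<p$ is handled by another application of Hölder on the compact measure space $(X,V_g)$.

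The principal technical obstacle lies in Step~1: assembling the slice-wise Sobolev inequalities into a single global one with uniform constants. Orbits with $d_x>k$, located outside $\Omega_k$, force transverse slices of dimension strictly less than $n-k$, where the local inequality is \emph{better} than the one we need but with a different exponent; one has to control these higher exponents by $q^{\ast}$ via the finite total volume of $X$. Additional care is required to ensure that the radii of the slices, the constants in the Euclidean Sobolev inequalities, and the Jacobians relating $dV_g$ on the tube to the product measure on $H\times S_x$ are all uniformly bounded, a step that rests decisively on the compactness of both $X$ and $H$, and on the fact that, because every orbit is uncountable, the action has no exceptional zero-dimensional orbits to spoil the tubular structure.
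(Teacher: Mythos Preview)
The paper does not actually prove this theorem: it is quoted as a result of Hebey and Vaugon, with only a one-sentence description of the original argument in the introduction (``local charts compatible with the dimension reduction under the Riemannian submersion induced by the isometries, reducing the compact embedding of functions to a Sobolev inequality in the space orthogonal to the $H$-orbits''). So there is no in-paper proof to compare against in detail.

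That said, your outline is exactly the strategy the paper attributes to Hebey--Vaugon: slice/tube charts transverse to the orbits, a Sobolev inequality on each $(n-d_x)$-dimensional slice giving the improved critical exponent $q^{\ast}=(n-k)p/(n-k-p)$, and then interpolation with the ordinary Rellich--Kondrachov embedding to obtain compactness for $q<q^{\ast}$. The interpolation step you sketch is essentially identical to the argument the paper uses in Proposition~\ref{noncritic}.

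One small correction to your sketch: the set $\Omega_k=\{x:\dim H(x)=k\}$ is in general \emph{closed}, not open. For a compact Lie group action the principal orbit type (maximal orbit dimension, minimal stabilizer) is the one that is open and dense; the minimal-dimensional orbits sit in the singular strata. This does not damage your argument, since in a tube about an orbit of dimension $d_x\ge k$ the transverse slice has dimension $n-d_x\le n-k$, and you already note that the resulting local Sobolev exponent is at least as good as $q^{\ast}$ and can be absorbed using the finite volume of $X$. Just do not invoke openness of $\Omega_k$ when you write the details.
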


In a metric-measure space $(X,d,\mu)$ conditions for the metric $d$ and the measure $\mu$ are sometimes required, leading to  \textit{synthetic} extensions of \textit{analytic} Riemannian concepts, like curvature, volume, and dimension (see \cite{villani} for a friendly introduction to these ideas). We will use the doubling condition for the metric space $(X,d)$ and the lower Ahlfors $s$-regularity of the metric-measure space 
$(X, d, \mu)$.\footnote{See Section \ref{Preliminaries} for definitions.} For instance, if $(X,g)$ is a compact  $n$-manifold with induced distance $d_g$, then $(X,d_g,V_g)$ is
a lower $n$-regular metric-measure space, and $(X,d_g)$ is doubling. \\

As aforementioned,  we use Haj\l{}asz gradients: denote by $M^p_1(X,\mu)$ the vector space of functions in $L^p(X,\mu)$ such that their Haj\l{}asz gradient is also in
$L^p(X,\mu)$. In Section \ref{examples}, Theorem \ref{riem-mm},  we  see that when $(X,d_g, V_g)$ is the natural metric-measure space induced from a compact  $n$-manifold $(X,g)$, then $M^p_1(X, V_g)$ coincides with
$L^p_1(X, V_g)$ for $1 < p < \infty$.  \\

In the \textit{analytic} context of Riemannian geometry, symmetry groups are subgroups of 
$\text{Diff}(X)$. Instead, in the \textit{synthetic} context of metric-measure spaces, symmetry groups are subgroups of $\text{Aut}(X)$, the group of automorphisms or bijections of $X$: let $H$ be a subgroup of $\text{Aut}_{\mu}(X)$, the group of 
$\mu$-preserving automorphisms of $X$; denote by 
$M^{p}_{1,H}(X,\mu)$ the subspace of $M^{p}_1(X,\mu)$ made up
of $H$-invariant functions. The main result in this work is: 

%%%%%%%%%%%%%%
%%%Main result%%%%
%%%%%%%%%%%%%%%%

\begin{theorem}  \label{main2}
 Assume that $(X, d, \mu)$ is a metric-measure space that is compact, Ahlfors lower  $s$-regular, with $(X,d)$ doubling, and such that $M^p_1(X,\mu)$ is reflexive.
If $H$ is a subgroup of  $\text{Aut}_{\mu}(X)$ such that for every $x$ in $X$ the set $H(x)$ is uncountable, then
\begin{equation}    \label{comp-emb-3}
M^p_{1,H} (X,\mu) \hookrightarrow \hookrightarrow L^{q} (X,\mu)
\end{equation}
whenever $1 < p < s$ and $1 \leq q \leq p^{*} = \frac{s p}{s - p}$.
\end{theorem}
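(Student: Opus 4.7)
My plan is to reduce the problem to the critical exponent $q = p^{*}$ and close that case via a concentration--compactness argument whose only non-soft input is $H$-invariance. For subcritical $1 \le q < p^{*}$, the compact embedding $M^p_1(X,\mu) \hookrightarrow\hookrightarrow L^q(X,\mu)$ should already be available for doubling, Ahlfors lower $s$-regular spaces via a Rellich--Kondrachov theorem for Haj\l{}asz spaces, and needs no invariance. Hence the substance of the theorem lies at the endpoint, together with an interpolation step to recover the full range.

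Given a bounded sequence $(u_n) \subset M^p_{1,H}(X,\mu)$, reflexivity of $M^p_1(X,\mu)$ yields $u_n \rightharpoonup u$ weakly along a subsequence; because $H$-invariance is preserved under weak limits, $u \in M^p_{1,H}$. The subcritical compactness already gives $u_n \to u$ strongly in every $L^q$ with $q < p^{*}$, and the continuous Sobolev embedding $M^p_1 \hookrightarrow L^{p^{*}}$ furnishes both a uniform bound $\|u_n\|_{L^{p^{*}}} \le C$ and weak convergence $u_n \rightharpoonup u$ in $L^{p^{*}}$. Extracting a further subsequence, the finite positive Borel measures $|u_n|^{p^{*}}\, d\mu$ converge weakly-$*$ to some finite measure $\nu$ on $X$; the problem reduces to showing $\nu = |u|^{p^{*}}\, d\mu$.

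The key technical step is a metric-measure version of P.-L.~Lions' decomposition: using Lipschitz cut-offs on balls $B(x,r)$ (available in any metric space) together with the Sobolev inequality applied locally, I would establish
\begin{equation*}
\nu = |u|^{p^{*}}\, d\mu + \sum_{j \in J} \nu_j \, \delta_{x_j},
\end{equation*}
where $J$ is at most countable and $\nu_j > 0$, together with a matching lower bound on a defect measure coming from the Haj\l{}asz gradients. Once this is in hand, the geometric input is immediate: each $u_n$ is $H$-invariant and every $h \in H$ preserves $\mu$, so each $|u_n|^{p^{*}}\, d\mu$ is $H$-invariant, and therefore so is $\nu$. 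An atom at any $x_j$ would force an atom of mass $\nu_j$ at every point of the orbit $H(x_j)$; since $H(x_j)$ is uncountable while $\nu(X) < \infty$ can carry only countably many atoms, we must have $J = \emptyset$. Thus $\|u_n\|_{L^{p^{*}}} \to \|u\|_{L^{p^{*}}}$, and combined with the weak convergence in the uniformly convex space $L^{p^{*}}(X,\mu)$ (note $p^{*} > p > 1$), this gives strong convergence in $L^{p^{*}}$. Interpolation then yields strong convergence in every $L^q$ with $1 \le q \le p^{*}$.

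I expect the main obstacle to be establishing the Lions-type decomposition in this synthetic setting, since the classical proof relies on smooth cut-offs, a Brezis--Lieb argument, and H\"older inequalities applied to gradients, each of which must be replaced by its Haj\l{}asz counterpart and combined with Ahlfors lower $s$-regularity to control mass on small balls. By contrast, the final $H$-invariance argument is essentially a pigeonhole: any finite Borel measure on $X$ has at most countably many atoms, so uncountable orbits of equal-mass atoms cannot exist, which is precisely the reason that the hypothesis on $H$ enters only through the uncountability of orbits and not through any differentiable structure.
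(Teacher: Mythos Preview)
Your proposal is correct and follows essentially the same approach as the paper: reflexivity plus subcritical Rellich--Kondrachov to reduce to the endpoint, a Lions-type concentration--compactness decomposition of the weak-$*$ limit of $|u_n|^{p^*}\,d\mu$ into an absolutely continuous part and countably many atoms (this is the paper's Theorem~3.2, proved via a reverse H\"older lemma and Br\'ezis--Lieb), and then the $H$-invariance pigeonhole that forces the atomic part to vanish because orbits are uncountable. The paper concludes exactly as you do, by norm convergence plus uniform convexity of $L^{p^*}$.
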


\begin{remark}
In contrast with classical Sobolev spaces, there are situations where $M^p_1(X,\mu)$ is not reflexive for $1<p<\infty$: some examples of  this unexpected phenomena are self similar Cantor sets, see \cite{rissanen}. On the other hand, a discussion about sufficient conditions on $(X,d,\mu)$ for $M^p_1(X,\mu)$ to be reflexive can be found in \cite{gorka1, Hajlasz1}.\footnote{For instance, $M^p_1(X,V_g)$ is reflexive for every compact $(X,g)$.} 
\end{remark}

%%%%%%%%%%%%%%%%%%%%%%%%%%%%%%%
%%%%%%%%%%%%%%%%%%%%%%%%%%%%%%%%%%%%%%%%%%%%

To highlight the contributions of this work, we make some remarks:
\begin{enumerate}
\item Concerning the groups appearing in Theorems \ref{heb-vaug} and \ref{main2}: 
In the context of metric-mesure spaces arising from Riemannian manifolds, the group $H$ in Theorem \ref{main2} is a subgroup of $\text{Diff}_{V_g}(X)$, the group of volume preserving
diffeomorphisms of $(X,g)$; in Theorem \ref{heb-vaug} the group $H$ is a compact subgroup of the smaller group
$\text{Isom}(X,g)$. A classical result of S. Myers and N. Steenrod, see \cite{kobayashi}, provides 
$\text{Isom}(X,g)$ with the structure of a finite dimensional Lie group, that is compact  if $X$ is compact. In contrast, if $X$ is compact, H. Omori provided the larger group 
$\text{Diff}_{V_g}(X)$, and $\text{Diff}(X)$ as well, with the structure  of an Inverse Limit of Hilbert manifolds, see \cite{ebin, kobayashi}. The Lie algebras of both groups are \textit{represented} by vector fields: those in the \textit{formal}
 Lie algebra of 
$\text{Diff}_{V_g}(X)$ are free of divergence; those in the Lie algebra of $\text{Isom}(X,g)$ are Killing, a stronger condition. In every Riemannian manifold there are non-trivial vector fields free of divergence; on the other hand, the sign of the Ricci curvature imposes restrictions for Killing vector fields: if the Ricci tensor is non-positive and negative definite at some point, there are no non-trivial Killing vector fields, and the group 
$\text{Isom}(X,g)$ is finite \cite{Berard, kobayashi}.

\item Concerning the proofs of Theorems \ref{heb-vaug} and \ref{main2}: Roughly speaking, Theorem \ref{heb-vaug} uses local charts compatible with the dimension reduction under the Riemannian submersion induced by the isometries, reducing the compact embedding of functions to a Sobolev inequality in the space orthogonal to the $H$-orbits, providing a convenient  setup for specific results obtained by H. Berestycki, E. Lieb, P. L. Lions and others, see \cite{lions1}. The proof of Theorem \ref{main2} is different: the dimension reduction compatible with isometries used in Theorem \ref{heb-vaug} is not always compatible with volume preserving diffeomorphisms.\footnote{The quotient space might not be Hausdorff.} Some ingredients in the proof are a Sobolev-Haj\l{}asz inequality \cite{Hajlasz}, and variations of the Concentration-Compactness principle, see \cite{lions2}.\\
\end{enumerate}
%%%%%%%%%%%%%%%%%%%%%%%%%
In Section \ref{Preliminaries} we provide definitions and results that will be used in Section \ref{main}, where a detailed proof of Theorem \ref{main2} is given. In  
Section \ref{examples} we see that Theorem \ref{main2} can be applied
in the Riemannian context, and discuss necessary and sufficient conditions for its applicability when the dimension of the $H$-orbits is 1.\\

For Sobolev embeddings in non-compact spaces using symmetry, see  \cite{Gaczkowski, gorka1}, and the references there.

%%%%%%%%%%%%%%%%%%%%%%%%%%%%%%%%%%%%%%%%%PRELIMINARIES
%%%%%%%%%%%%%%%%%%%%%

 \section{Preliminaries} \label{Preliminaries}

In this work $(X, d, \mu)$ is a metric-measure space equipped with a metric $d$ and a Radon measure $\mu$. 
We assume  that the measure of every open non-empty set is positive, and that the measure of every bounded set is finite. 

In most parts of our paper we will assume that the metric-measure space $(X, d, \mu)$ is \textit{lower Ahlfors $s$-regular}: this means that 
there exists a constant $b$ such that
\[
   b R^s \leq \mu \left(B_R(x)\right) 
\]
for all balls $B_R(x)$ in $X$ with $R < \hbox{diam} X$.

A metric space is said to be \textit{doubling} if there exists a constant $C$ such that for every ball of radius $R$, there 
exist $C$ balls of radius $R/2$ that cover the original ball. It not difficult to see that if  $(X, d, \mu)$ is a doubling metric-measure space,\footnote{A metric-measure space $(X,d,\mu)$ is said to be \textit{doubling} if the measure $\mu$ is \textit{doubling}, namely  if 
there exists a constant $C_{\mu}>1$ such that for every ball $B_R(x)$
one has
$ \mu \left(B_{2R}(x)\right) \leq C_{\mu}\ \mu \left(B_R(x)\right).$
} then $(X, d)$ is a doubling metric space (see \cite{gromov}, Appendix $B_+$). Conversely, J. Luukkainen and E. Saksman in \cite{Luukkainen} prove that every complete doubling metric space carries a doubling measure. \\

If $(X, d, \mu)$ is a metric-measure space,
say that a  function $f$ in $L^p(X,\mu)$ belongs to the \textit{Haj{\l}asz-Sobolev} space  $M^{p}_{1}(X,\mu)$ if there exists some $g \in L^{p}(X,\mu)$, called  a \textit{Haj\l{}asz gradient}, such that 
\begin{eqnarray}   \label{global-local}
|f(x) - f(y)| \leq  d(x,y) \left( g(x) + g(y) \right) 
\end{eqnarray}
for $\mu$ almost every $x$ and $y$ in $X$. In this context, given $f$ in $M^{p}_1(X,\mu)$, we denote by $g_f$ any Haj\l{}asz gradient for $f$, to endow  the space $M^p_1(X,\mu)$ with the norm
\begin{eqnarray}   \label{global-norm}
  \|f\|_{M^p_1(X,\mu)}:= \|f\|_{L^{p}(X,\mu)}+\inf_{g_f} \|g_f\|_{L^{p}(X,\mu)},
\end{eqnarray}
and then $M^p_1(X,\mu)$ is a Banach space. 

In the same context, say that $f \in L^p(X,\mu)$ belongs to $m^p_1(X,\mu)$ if there exists some $g \in L^{p}(X,\mu)$, called a \textit{local Haj\l{}asz gradient}, such that for every $z$ in $X$ there exists an open set $U_z$ and some $E_z \subset U_z$ with $\mu(E_z) = 0$, such that for every pair of points $\{x,y\}$ in $U_z  \thicksim E_z$ the inequality (\ref{global-local}) holds. As in (\ref{global-norm}) one defines
\begin{eqnarray}   \label{local-norm}
  \|f\|_{m^p_1(X,\mu)}:= \|f\|_{L^{p}(X,\mu)}+\inf_{g_f} \|g_f\|_{L^{p}(X,\mu)},
\end{eqnarray}
where now the infimum is over all those $g_f$ that are local Haj\l{}asz gradients for $f$. Then $m^p_1(X,\mu)$ is also a Banach space.
It is obvious that  Haj\l{}asz gradients for a function $f$ are local Haj\l{}asz gradients; the converse is not true in general, see \cite{JSYY} for an example.

For a detailed exposition of some basic properties of these spaces, we refer to \cite{Ambrosio, Hajlasz, Hajlasz1, HajlaszKoskela,  Heinonen, H, JSYY, Kinnunen}. \\

The next result will be useful:
 \begin{prop} (See \cite{Hajlasz}) \label{wlozenie}
Suppose $(X, d, \mu)$ is an Ahlfors lower $s$-regular metric-measure space of finite diameter. If $1 < p < s$, then
	\begin{eqnarray*}
		M^p_1(X,\mu)  \hookrightarrow L^{p^*}(X,\mu),
	\end{eqnarray*}
 where $p^* =\frac{sp}{s-p}$. Moreover, there exists  a constant  
$C=C(s,p,b)$, depending on $s, p, b$, such that for each $f$ in $M^p_1(X,\mu)$ 
\begin{eqnarray*}
		\|f\|_{L^{p^*}(X,\mu)} \leq C\left(\|f\|_{L^{p}(X,\mu)}+\| g_f\|_{L^{p}(X,\mu)}\right)
	\end{eqnarray*}
whenever $g_f$ is a Haj\l{}asz gradient for $f$.
\end{prop}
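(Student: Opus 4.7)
The plan is to reduce this Sobolev-type embedding to the $L^p \to L^{p^*}$ mapping property of the fractional integral
$$
I_1 h(x) := \int_X \frac{h(y)}{d(x,y)^{s-1}}\, d\mu(y)
$$
via a pointwise Riesz-potential bound for functions in $M^p_1(X,\mu)$.

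Fix $f \in M^p_1(X,\mu)$ with Hajlasz gradient $g_f$. The first step is to establish
$$
|f(x) - f_X| \leq C \, I_1 g_f(x) \quad \text{for $\mu$-a.e.\ } x \in X,
$$
where $f_X := \dashint_X f\, d\mu$. I would prove this by setting $D := \mathrm{diam}(X)$ and telescoping along the shrinking chain $B_k := B(x, 2^{-k} D)$, so that $B_0 \supseteq X$ and, by Lebesgue differentiation (valid in the doubling setting), $f_{B_k} \to f(x)$ at a.e.\ $x$. Double-averaging (\ref{global-local}) over $B_k \times B_k$ gives
$$
\dashint_{B_k} |f - f_{B_k}|\, d\mu \leq 2\, r_{B_k} \dashint_{B_k} g_f\, d\mu,
$$
and the nesting $B_{k+1} \subset B_k$ combined with $\mu(B_k) \geq b\, r_{B_k}^s$ converts each $|f_{B_{k+1}} - f_{B_k}|$ into a multiple of $r_{B_k}^{1-s} \int_{B_k} g_f\, d\mu$. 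Summing the telescoping series and rewriting it as an annular integral produces the Riesz bound.

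The second step is a Hedberg-type inequality. Splitting $I_1 g_f(x)$ at a radius $r > 0$, I would bound the inner piece by $C\, r\, \mathcal{M} g_f(x)$ using a dyadic annular decomposition, and the outer piece by $C\, r^{1 - s/p} \|g_f\|_{L^p(X,\mu)}$ using H\"older's inequality and Ahlfors regularity. Optimizing in $r$ yields
$$
I_1 g_f(x) \leq C\, \bigl(\mathcal{M} g_f(x)\bigr)^{p/p^*} \|g_f\|_{L^p(X,\mu)}^{\,1 - p/p^*}.
$$
Raising to the $p^*$-th power, integrating, and applying the $L^p$-boundedness of $\mathcal{M}$ (valid because lower Ahlfors regularity plus finite diameter forces enough covering structure to run the Vitali argument) gives $\|I_1 g_f\|_{L^{p^*}} \leq C \|g_f\|_{L^p}$. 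Combining with the trivial bound $\|f_X\|_{L^{p^*}} \leq C \|f\|_{L^p}$ (from H\"older and the finite total measure), using the triangle inequality, and taking the infimum over Hajlasz gradients yields the embedding with constant $C = C(s,p,b)$.

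The principal difficulty will be executing the Hedberg machinery rigorously using only the \emph{lower} $s$-regularity hypothesis: one needs Lebesgue differentiation and strong-type $(p,p)$ bounds for $\mathcal{M}$, both of which require a doubling-type covering structure. The finite diameter combined with lower Ahlfors regularity is enough to supply this, but the annular decomposition must be arranged carefully since only a lower bound on $\mu(B(x,r))$ is directly available, not an upper one.
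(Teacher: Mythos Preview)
The paper gives no proof of this proposition; it simply cites Haj\l{}asz's original paper. Your outline is close in spirit to the standard route (pointwise Riesz-type bound via telescoping, then mapping properties of the fractional integral), and your Step~1 is fine: the chain estimate
\[
|f(x)-f_X|\;\le\;C\sum_{k\ge 0} r_k^{1-s}\!\int_{B(x,r_k)} g_f\,d\mu\;\le\;C\,I_1 g_f(x)
\]
can indeed be proved using only the lower bound $\mu(B(x,r))\ge b r^s$, and you do not actually need the Lebesgue differentiation theorem here, since $|f(x)-f_{B_k}|\le r_k\bigl(g_f(x)+\dashint_{B_k}g_f\bigr)$ and the second term is dominated by a tail of the convergent series.

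The gap is in Step~2. Both ingredients you invoke require more than lower $s$-regularity:
\begin{itemize}
\item The outer Hedberg bound $\displaystyle\int_{d(x,y)>r}\frac{g_f(y)}{d(x,y)^{s-1}}\,d\mu(y)\le C\,r^{\,1-s/p}\|g_f\|_{L^p}$ comes from H\"older together with $\int_{d(x,\cdot)>r} d(x,\cdot)^{(1-s)p'}d\mu\le C r^{(1-s/p)p'}$, and the annular evaluation of the latter needs an \emph{upper} bound $\mu(B(x,\rho))\le C\rho^s$. With only a lower bound you get at best $r^{1-s}$, which after optimisation yields the wrong exponent.
\item The strong $(p,p)$ bound for $\mathcal{M}$ needs doubling. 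Your claim that ``lower Ahlfors regularity plus finite diameter forces enough covering structure to run the Vitali argument'' is not correct as stated: the $5r$-covering lemma is measure-free, but turning it into a weak-$(1,1)$ inequality requires $\mu(5B)\le C\,\mu(B)$, which is precisely the doubling hypothesis and does not follow from $\mu(B)\ge b r^s$ alone.
\end{itemize}

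Haj\l{}asz's proof in the cited paper avoids the Hardy--Littlewood maximal function altogether. He first obtains a \emph{weak}-type inequality directly from the defining property of Haj\l{}asz gradients: on the set $\{g_f\le t\}$ the function $f$ is $2t$-Lipschitz, and combining this with Chebyshev for $\{g_f>t\}$ and the lower bound $\mu(B)\ge br^s$ gives the weak estimate with the correct exponent. He then upgrades to the strong $(p,p^*)$ inequality by the Maz'ya truncation trick, using that truncations of $f$ have Haj\l{}asz gradients $g_f\mathbf{1}_{\{\text{level set}\}}$. If you want to salvage your approach, you must either add a doubling hypothesis (which the paper does \emph{not} assume in this proposition) or replace the Hedberg/maximal-function step by this weak-type plus truncation argument.
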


We use Proposition \ref{wlozenie} to infer:

 \begin{prop} \label{noncritic}
Assume that $(X, d, \mu)$ is an Ahlfors lower $s$-regular compact metric-measure space, with $(X, d)$  doubling. If $1 < p < s$, then for every $q <p^*$
	\begin{eqnarray*}
		M^p_1(X,\mu)  \hookrightarrow \hookrightarrow L^q(X,\mu),
	\end{eqnarray*}
  where $p^* =\frac{sp}{s-p}$.
\end{prop}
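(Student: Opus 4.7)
The plan is to combine the continuous critical embedding from Proposition \ref{wlozenie} with a direct compactness argument at subcritical exponents, obtained by discretizing at scale $r$ and letting $r\to 0$. Since $X$ is compact, $\mu(X)<\infty$, so Proposition \ref{wlozenie} already gives $M^p_1(X,\mu)\hookrightarrow L^{p^*}(X,\mu)\hookrightarrow L^q(X,\mu)$ continuously for all $1\leq q\leq p^*$; only the compactness needs proof.

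The heart of the argument is to show $M^p_1(X,\mu)\hookrightarrow\hookrightarrow L^p(X,\mu)$. Let $\{f_n\}$ be bounded in $M^p_1$ with Haj\l{}asz gradients $g_n$ uniformly bounded in $L^p$. For every $r>0$, by doubling together with compactness of $X$, I pick a maximal $r$-separated set $\{x_i^r\}_{i=1}^{N_r}$, which gives a finite cover by balls $B_r(x_i^r)$ with multiplicity bounded by a constant $K$ independent of $r$. Letting $\{E_i^r\}$ be a measurable partition of $X$ with $E_i^r\subseteq B_r(x_i^r)$, define
\[
A_r f_n:=\sum_{i=1}^{N_r} c_{n,i}^r\,\chi_{E_i^r},\qquad c_{n,i}^r:=\dashint_{B_r(x_i^r)} f_n\,d\mu.
\]
Applying (\ref{global-local}) pointwise with $x\in E_i^r$ and averaging in $y\in B_r(x_i^r)$ yields
\[
|f_n(x)-c_{n,i}^r|\leq 2r\Bigl(g_n(x)+\dashint_{B_r(x_i^r)} g_n\,d\mu\Bigr);
\]
taking $p$-th powers, integrating, using Jensen on the average, and summing over $i$ with the bounded multiplicity $K$ then gives
\[
\|f_n-A_r f_n\|_{L^p(X,\mu)}\leq C\,r\,\|g_n\|_{L^p(X,\mu)}
\]
with $C$ independent of $n$ and $r$. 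For fixed $r$, Ahlfors regularity bounds the coefficients $c_{n,i}^r$ uniformly in $n$, so the functions $A_rf_n$ lie in a bounded subset of the finite-dimensional space spanned by $\{\chi_{E_i^r}\}$. A diagonal extraction along $r=2^{-k}$, combined with the uniform smallness of $\|f_n-A_rf_n\|_{L^p}$ and the triangle inequality, produces a subsequence of $\{f_n\}$ that is Cauchy in $L^p(X,\mu)$.

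Interpolation then closes the argument. If $f_{n_k}\to f$ in $L^p$ while remaining bounded in $L^{p^*}$, then for $p\leq q<p^*$ I pick $\theta\in(0,1]$ with $\tfrac1q=\tfrac\theta p+\tfrac{1-\theta}{p^*}$ and apply H\"older:
\[
\|f_{n_k}-f\|_{L^q(X,\mu)}\leq\|f_{n_k}-f\|_{L^p(X,\mu)}^{\theta}\,\|f_{n_k}-f\|_{L^{p^*}(X,\mu)}^{1-\theta}\longrightarrow 0,
\]
while for $1\leq q<p$ the finite-measure bound $\|\cdot\|_{L^q(X,\mu)}\leq\mu(X)^{1/q-1/p}\|\cdot\|_{L^p(X,\mu)}$ suffices. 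I expect the main obstacle to be the discretization estimate $\|f_n-A_rf_n\|_{L^p}\leq Cr\|g_n\|_{L^p}$ with constant independent of $r$: the combinatorial input of the doubling condition, through the uniform bounded multiplicity of the $r$-covers, is exactly what forces the error to vanish as $r\to 0$ uniformly in $n$, and without it the finite-dimensional reduction alone would not suffice.
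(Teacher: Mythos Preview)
Your argument is correct, and the overall architecture---first establish compactness of $M^p_1(X,\mu)\hookrightarrow L^p(X,\mu)$, then interpolate against the continuous critical embedding from Proposition~\ref{wlozenie}---is exactly the paper's. The genuine difference is in how that first step is obtained. The paper simply invokes Theorem~2 of \cite{Kalamajska} for the compact embedding $M^p_1(X,\mu)\hookrightarrow\hookrightarrow L^p(X,\mu)$ on doubling spaces, whereas you reprove it by hand: you discretize at scale $r$ via averages over a bounded-multiplicity cover, show the uniform approximation $\|f_n-A_rf_n\|_{L^p}\leq Cr\|g_n\|_{L^p}$ using the Haj\l{}asz inequality and Jensen, and then run a diagonal extraction. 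What you gain is a self-contained proof that isolates precisely where doubling enters (the uniform multiplicity bound) and where lower Ahlfors regularity enters (the uniform bound on the coefficients $c_{n,i}^r$ for fixed $r$, although in fact mere positivity of balls would suffice there). What the paper gains is brevity. The interpolation step is the same in both proofs, just phrased in terms of $\epsilon$-nets in the paper and convergent subsequences in yours.
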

\begin{proof}
By Proposition \ref{wlozenie} have that $M^p_1(X,\mu) \hookrightarrow L^q(X,\mu)$ for every $q \in [1, p^*]$. Moreover, 
since $(X, d)$ is doubling, by Theorem 2 in \cite{Kalamajska} we have the compact embedding
\begin{eqnarray}\label{comp}
  M^p_1(X,\mu) \hookrightarrow \hookrightarrow L^{p}(X,\mu).
\end{eqnarray}
Hence if $q\in [1,p]$, then
\[
	M^p_1(X,\mu) \hookrightarrow \hookrightarrow L^p(X,\mu)  \hookrightarrow L^q(X,\mu).
\]
Next, consider the case when $p < q < p^*$. We will prove that the ball $\mathfrak{B}=\{f: \|f\|_{M^p_1(X,\mu)}\leq 1\}$ is precompact in $L^q(X,\mu)$. Fix $\theta$ in $(0,1)$ so that
\[
	\frac{1}{q}=\frac{\theta}{p}+\frac{1-\theta}{p^*},
\]
and use (\ref{comp}) to note that $\mathfrak{B}$ is precompact in $L^p(X,\mu)$. Hence for every $\epsilon >0$ there exists an 
$\tilde{\varepsilon}=2C\epsilon^{\frac{1}{\theta}} / (2C)^{\frac{1}{\theta}}$ net\footnote{This means that for each $f$ in $\mathfrak{B}$ there exists some $k$ in $\{1,...,N\}$ such that $\|f-f_k\|_{L^p(X,\mu)} <\tilde{\varepsilon}$.} of $\mathfrak{B}$ in $L^p(X,\mu)$, say 
 $\{f_k\}_{k \in \{1,...,N\}}$, where
$C$ is the constant from Proposition \ref{wlozenie}. Now it is enough to prove that $\{f_k\}_{k \in \{1,...,N\}}$ is an $\epsilon$ net of $\mathfrak{B}$ in $L^q(X,\mu)$; indeed, by the interpolation inequality we have
\begin{eqnarray*}
		\|f-f_k\|_{L^q(X,\mu)}&\leq&\|f-f_k\|^{\theta}_{L^p(X,\mu)}\|f-f_k\|^{1-\theta}_{L^{p^*}(X,\mu)}\\
		&\leq&C^{1-\theta}\|f-f_k\|^{\theta}_{L^p(X,\mu)}\|f-f_k\|^{1-\theta}_{M^p_1(X,\mu)}\\
		&\leq&2^{1-\theta}C^{1-\theta}\|f-f_k\|^{\theta}_{L^p(X,\mu)}\leq \epsilon
\end{eqnarray*}
for some $k$ in $\{1,...,N\}$.
\end{proof}

\begin{remark}
Proposition \ref{noncritic} highlights the fact that in general one cannot expect that $M^p_1(X,\mu)  \hookrightarrow \hookrightarrow L^{p^*}(X,\mu)$. Theorem \ref{main2} ensures that some proper subset of $M^p_1(X,\mu)$ is relatively compact in  $L^{p^*}(X,\mu)$.
\end{remark}

%%%%%%%%%%%%%%%%%%%%%%%%%%%%%%%%%%%%%%%%%%%%%%%%%%%%%%%%%%%%%%%%

 \subsection{Auxiliary Lemmata}

The next lemma seems to be well known, however we give a detailed proof due to its role in Section \ref{main}.
\begin{lemma} \label{lemma:delty}
Here $(\Omega ,d , \mu)$ is a separable metric-measure space with a finite Borel measure $\mu$. 
Suppose that there exists some $\delta >0$ such that for every measurable set $A$, either $\mu(A) = 0$ or $\mu(A) \geq \delta$. Then 
there exists a finite set $\{x_i \}_{i \in I}$ of points in $\Omega$ and a finite set of numbers $\{ \mu_i \}_{i \in I}$ not smaller than $\delta$ such that
\[
\mu = \sum_{i \in I} \mu_i \delta_{x_i}.
\]
\end{lemma}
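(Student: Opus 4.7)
The strategy is a two-step dichotomy: classify points into \emph{atoms} and \emph{points with a null neighborhood}, show the first set is finite, then use separability (via Lindel\"of) to conclude the second set carries no mass.

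\textbf{Step 1: atoms are finite in number.} Every singleton $\{x\}$ is Borel measurable, so the hypothesis forces $\mu(\{x\})$ to be either $0$ or at least $\delta$. Call $x$ an \emph{atom} in the latter case, and let $A = \{x \in \Omega : \mu(\{x\}) \geq \delta\}$. Since the singletons $\{x\}$ for $x \in A$ are pairwise disjoint and each has mass at least $\delta$, finiteness of $\mu(\Omega)$ immediately bounds $|A| \leq \mu(\Omega)/\delta$. Enumerate $A = \{x_1, \ldots, x_N\}$ and set $\mu_i := \mu(\{x_i\}) \geq \delta$.

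\textbf{Step 2: local dichotomy at non-atoms.} For any $x \in \Omega$, the function $r \mapsto \mu(B(x,r))$ is nondecreasing in $r$ and satisfies $\lim_{r \downarrow 0} \mu(B(x,r)) = \mu(\{x\})$ by continuity of the finite Borel measure from above on the nested balls. Hence if $x \notin A$ (so $\mu(\{x\}) = 0$), then $\mu(B(x,r)) < \delta$ for all sufficiently small $r$, and the hypothesis forces $\mu(B(x,r)) = 0$ for such $r$. In particular, every non-atom has an open neighborhood of $\mu$-measure zero.

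\textbf{Step 3: the non-atomic part is null.} Let
\[
V \;=\; \bigcup\bigl\{\, B(x,r) : x \in \Omega,\ r>0,\ \mu(B(x,r))=0 \,\bigr\}.
\]
This is an open subset of $\Omega$, and by Step 2 it contains $\Omega \setminus A$. Since $\Omega$ is separable, it is second countable, and so is the open subset $V$; in particular $V$ is Lindel\"of, and the open cover of $V$ by these null balls admits a countable subcover $\{B_k\}_{k \in \mathbb{N}}$. Countable subadditivity then gives $\mu(V) \leq \sum_k \mu(B_k) = 0$. Consequently $\Omega \setminus V \subseteq A = \{x_1, \ldots, x_N\}$, and for every Borel set $E$,
\[
\mu(E) \;=\; \mu(E \cap V) + \mu(E \setminus V) \;=\; \sum_{i=1}^{N} \mu_i\, \mathbf{1}_{E}(x_i),
\]
which is exactly $\sum_{i=1}^N \mu_i \delta_{x_i}(E)$.

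\textbf{Main obstacle.} The only nontrivial point is ensuring that the potentially uncountable union defining $V$ is indeed $\mu$-null; this is where separability (through second countability and Lindel\"of) is essential, since otherwise we could only conclude that each individual null ball has measure zero, not their union. Everything else is bookkeeping using the hypothesis that no ``small but positive'' sets exist.
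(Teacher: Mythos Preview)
Your proof is correct and follows essentially the same route as the paper: identify the finite set of atoms, show every non-atom sits inside an open ball of measure zero, and then invoke separability via Lindel\"of to pass from the null balls to a null countable union. The only cosmetic differences are that you justify the existence of the null ball via continuity of $\mu$ from above (the paper leaves this implicit) and you phrase the covering through the auxiliary set $V$ rather than directly covering $\Omega\setminus A$; one small point worth making explicit is that no atom can lie in $V$ (since it would then have measure zero), which is what makes $\mu(E\setminus V)=\sum_i \mu_i\,\mathbf{1}_E(x_i)$ hold.
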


\begin{proof}
Consider the  set $A_{\delta} := \left\{ x \in \Omega \, : \, \mu(x) \geq \delta \right\}$. Since $\mu(\Omega) < \infty$, the set 
$A_{\delta}$ must be finite.
We will show that $\mu(\Omega \thicksim A_{\delta}) = 0$.  Since $\Omega \thicksim A_{\delta}$ is open, we have
\[
\Omega \thicksim A_{\delta} = \bigcup_{x \in \Omega \thicksim A_{\delta}} B_{R_x}(x) .
\]
Moreover, since there are no atoms in 
$\Omega \thicksim A_{\delta}$, for every $x$ in $\Omega \thicksim A_{\delta}$ we can choose $R_x$ in such a way that
\[
\mu(B_{R_x}(x)) = 0.
\]
Furthermore, since $\Omega$ is separable, Lindel\"{o}f's lemma yields
\[
\Omega \thicksim A_{\delta} = \bigcup_{x \in A} B_{R_x}(x),
\]
where $A$ is a countable subset of  $\Omega \thicksim A_{\delta} $, and $\mu( \Omega \thicksim A_{\delta} ) = 0$ follows.
\end{proof}

To state the next lemma, given some space $F(\Omega)$ of functions on some set $\Omega$, denote by
\[
F_c (\Omega) :=\{\phi \in F(\Omega): \hbox{spt} \phi  \subset \subset \Omega \}
\]
the subset of  $F(\Omega)$  consisting of those functions whose support is a compact subset of
$\Omega$.
 
\begin{lemma} \label{aproksymacja}
Here $(X,d)$ is a locally compact metric space with two Radon measures 
$\mu$ and $\nu$, and  $ \Omega \subset X$ is a precompact open set. Then for every $p$ and $r$ in $[1, \infty)$ the set 
$\hbox{Lip}_c (\Omega)$ is equidense both in $L^{r}(\Omega,\mu)$ and in $L^{p}(\Omega,\nu)$. This means that for every $\epsilon >0$ and every $f \in L^{r}(\Omega,\mu) \cap L^{p}(\Omega,\nu)$ 
there exists some $\phi$ in $\hbox{Lip}_c (\Omega)$ such that
\[
  \|f -\phi\|_{L^r(\Omega,\mu)} \leq \epsilon \quad \text{and} \quad \|f-\phi\|_{L^p(\Omega,\nu)} \leq \epsilon.
\]
\end{lemma}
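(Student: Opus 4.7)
The plan is, given $\epsilon>0$ and $f \in L^r(\Omega,\mu) \cap L^p(\Omega,\nu)$, to produce a single $\phi \in \hbox{Lip}_c(\Omega)$ approximating $f$ in both norms by a four-stage construction in which each stage controls the $\mu$-error and the $\nu$-error together. The observation that ties the two measures is that, since $\mu$ and $\nu$ are Radon and $\Omega$ is precompact, both are finite on $\Omega$, so $\lambda := \mu + \nu$ is also a finite Radon measure on $\Omega$, and every regularization carried out for $\lambda$ will work for $\mu$ and $\nu$ at once.

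First I would truncate by $f_N := (-N) \vee (f \wedge N)$ and use dominated convergence to pick $N$ so that $f_N$ lies within $\epsilon/4$ of $f$ in both norms. Since $f_N$ is bounded and Borel, it admits a uniform approximation by a simple function $s = \sum_{i=1}^m c_i \mathbf{1}_{E_i}$ with disjoint Borel $E_i \subset \Omega$; uniform smallness combined with finiteness of $\mu$ and $\nu$ on $\Omega$ makes the error in both norms at most $\epsilon/4$. The crucial step is to approximate each $\mathbf{1}_{E_i}$ by a Lipschitz function with compact support in $\Omega$, simultaneously in both norms. Inner and outer regularity of $\lambda$ produce, for a small $\eta>0$ to be chosen later, a compact $K_i$ and an open $V_i$ with $K_i \subset E_i \subset V_i \subset\subset \Omega$ (possible because $\Omega$ is open with compact closure) such that $\lambda(V_i \setminus K_i) < \eta$; since $\mu,\nu \leq \lambda$, both $\mu(V_i \setminus K_i)$ and $\nu(V_i \setminus K_i)$ are less than $\eta$. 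The Urysohn-type function
\[
\phi_i(x) := \frac{d(x, X \setminus V_i)}{d(x, X \setminus V_i) + d(x, K_i)}
\]
is Lipschitz on $X$ with values in $[0,1]$, equals $1$ on $K_i$, vanishes outside $V_i$, and satisfies $|\phi_i - \mathbf{1}_{E_i}| \leq \mathbf{1}_{V_i \setminus K_i}$; its Lipschitz constant is controlled because $d(K_i, X \setminus V_i) > 0$. Setting $\phi := \sum_{i=1}^m c_i \phi_i \in \hbox{Lip}_c(\Omega)$ and choosing $\eta$ small relative to $m$ and the $|c_i|$ gives the remaining $\epsilon/2$ of error.

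The main obstacle is exactly this coordination issue: for each $E_i$ one needs a single compact-open sandwich $K_i \subset E_i \subset V_i$ that regularizes $E_i$ with respect to $\mu$ and $\nu$ simultaneously, and it is resolved cleanly by applying Radon regularity to $\lambda = \mu + \nu$. Once this is arranged the rest of the construction is routine: simple-function approximation of bounded Borel functions is standard, the Urysohn function displayed above is Lipschitz on any metric space because each $d(\cdot,A)$ is $1$-Lipschitz and the denominator is bounded below by $d(K_i, X \setminus V_i)$, and a finite linear combination of Lipschitz functions with compact support in $\Omega$ is again such a function.
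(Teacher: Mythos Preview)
Your proof is correct and follows essentially the same route as the paper's: reduce to approximating characteristic functions, use Radon regularity to sandwich each set between a compact and an open set with small symmetric difference for both measures simultaneously, and then build a Lipschitz Urysohn-type cutoff supported in the open set and equal to $1$ on the compact. The only cosmetic differences are that you make the simultaneous regularization explicit via $\lambda=\mu+\nu$ (the paper simply asserts the joint sandwich), and you use the quotient formula $d(x,X\setminus V_i)/\bigl(d(x,X\setminus V_i)+d(x,K_i)\bigr)$ for the Lipschitz bump whereas the paper uses a McShane-type extension truncated at $0$.
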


\begin{remark}
By Urysohn's lemma $C_c (\Omega)$ is dense in $L^r(\Omega,\mu)$ and in $L^p(\Omega,\nu)$.
\end{remark}
\begin{proof}
To prove the lemma it is sufficient to check that for every measurable set $A$ the characteristic function $\mathbf{1}_A$ 
can be approximated both in $L^r(\Omega,\mu)$ and in 
$L^p(\Omega,\nu)$  by functions in $\hbox{Lip}_c (\Omega)$. 
The regularity of the measures ensures that there exists a sequence $\{K_n\}_n$ of compact sets and a sequence $\{U_n\}_n$ of open sets such that
$ K_n \subset A \subset U_n $, with
\[ 
\mu(U_n \thicksim K_n) \leq \frac{1}{n}\   \text{and} \ 
\nu(U_n \thicksim K_n) \leq \frac{1}{n}.
\]
Without loss of generality we 
can assume that $U_n \subset \Omega$. Moreover, since the space is locally compact, for every $n$
there exists an open precompact set $V_n$ such that 
\[
K_n \subset V_n \subset \overline{V}_n \subset U_n.
\]
Introduce the sequence of functions $\{\psi_n\}_n$ given for each $n$ by
\[
\psi_n := \mathbf{1}_{K_n} : K_n \cup (\Omega \thicksim V_n)\rightarrow [0,1] ,
\]
and denote by $\tilde{\psi}_n$ the extension of $\psi_n$ to all $\Omega$ defined as
\[
	\tilde{\psi}_n(x) := \sup_{y \in  K_n \cup (\Omega \thicksim V_n) } \left(  \psi_n(y) - L_n\ d(x,y) \right) ,
\]
where $L_n = 1 / \text{ dist } ( K_n , \Omega \thicksim V_n)$. Such functions are Lipschitz on $\Omega$, with $\tilde{\psi}_n = \psi_n$ on $ K_n \cup (\Omega \thicksim V_n)$, and with $\tilde{\psi}_n \leq 1$.
Finally, define 
\[
\phi_n = \max \{0, \tilde{\psi}_n \},
\]
and note that $\phi_n \in \hbox{Lip}_c (\Omega)$. 
Then it is easy to see that 
\[
\int_{\Omega} \left| \phi_n(x) - \mathbf{1}_A(x) \right|^{r} d \mu(x) \leq 2^{r} \mu(U_n \thicksim K_n) \leq \frac{2^{r}}{n} ,
\]
and similarly
\[
\int_{\Omega} \left| \phi_n(x) - \mathbf{1}_A(x) \right|^{p} d \nu(x) \leq \frac{2^{p}}{n}.
\]
\end{proof}

%%%%%%%%%%%%%%%%%%%%%%%%%%%%%%%%%%%%%%%%%%COMPACTNESS%%%%%%%%%%%%%%%%%%%%%%%%%%%%%%%%%%%

\section{Proof of Theorem \ref{main2}} \label{main}

In this section we prove  Theorem \ref{main2}, our main result. To prove such a theorem, we will need Theorem \ref{theorem:Lions}, which in turn requires Lemma \ref{lemma:rhol}, Lemma \ref{prod} and Lemma \ref{BL}.  We start with  Lemma \ref{lemma:rhol}:

\begin{lemma} \label{lemma:rhol}  (Reverse H\"older)
Let $\Omega \subset X$ be an open precompact subset of the metric space $(X, d)$, and let $\mu$ and $\nu$ be Radon measures on $\Omega$. Assume 
that $1 \leq p < r$.
If there exists a positve real number $C$ such that for every Lipshitz $\phi$ with compact support
\begin{equation}    \label{hypothesis}
\| \phi \|_{L^r(\Omega,\mu)} \leq C \| \phi \|_{L^{p}(\Omega,\nu)},
\end{equation}
then there exists a countable set of points $\{x_i \}_{i \in I}$ in $\Omega$ and a countable set $\{\mu_i\}_{i \in I}$ of  positive real numbers  such that
\[
\mu = \sum_{i \in I} \mu_i \delta_{x_i}.
\]
\end{lemma}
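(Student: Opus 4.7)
The plan is to show that $\mu$ must be concentrated on a countable set (the atoms of $\nu$), and then deduce the desired atomic representation. The argument proceeds in three logical steps.

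First, I would promote the hypothesis \eqref{hypothesis} from Lipschitz compactly supported test functions to indicator functions of measurable sets. Given any Borel $A \subset \Omega$, both $\mathbf{1}_A \in L^r(\Omega,\mu)$ and $\mathbf{1}_A \in L^p(\Omega,\nu)$ since the measures are finite on the precompact $\Omega$. By Lemma \ref{aproksymacja}, there exists a sequence $\{\phi_n\} \subset \text{Lip}_c(\Omega)$ converging to $\mathbf{1}_A$ simultaneously in $L^r(\Omega,\mu)$ and in $L^p(\Omega,\nu)$. Applying \eqref{hypothesis} to each $\phi_n$ and passing to the limit yields
\[
\mu(A)^{1/r} \leq C\, \nu(A)^{1/p}, \qquad \text{i.e.} \qquad \mu(A) \leq C^r \nu(A)^{r/p},
\]
valid for every Borel set $A \subset \Omega$. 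In particular, $\mu \ll \nu$.

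Second, let $S := \{x \in \Omega : \nu(\{x\}) > 0\}$ be the set of atoms of $\nu$. Because $\nu$ is finite on the precompact set $\Omega$, the set $S$ is at most countable. I claim that $\mu(\Omega \setminus S) = 0$. Indeed, the restriction $\nu|_{\Omega \setminus S}$ is atomless, so by Sierpi\'nski's theorem on the range of atomless measures, for each integer $N \geq 1$ we can partition $\Omega \setminus S$ into Borel sets $E_1,\ldots,E_N$ with $\nu(E_i) = \nu(\Omega \setminus S)/N$. Applying the inequality from the first step to each $E_i$,
\[
\mu(\Omega \setminus S) = \sum_{i=1}^{N} \mu(E_i) \leq C^r \sum_{i=1}^{N} \nu(E_i)^{r/p} = C^r\, \nu(\Omega\setminus S)^{r/p}\, N^{1 - r/p}.
\]
Since $r/p > 1$, the right-hand side tends to $0$ as $N \to \infty$, proving the claim.

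Third, since $\mu$ is concentrated on the countable set $S$, we may write
\[
\mu = \sum_{x \in S} \mu(\{x\})\, \delta_x,
\]
and, keeping only those $x_i \in S$ with $\mu_i := \mu(\{x_i\}) > 0$, we obtain the desired representation. The main obstacle I anticipate is the atomless-partition step: justifying the existence of equi-$\nu$ Borel partitions of $\Omega \setminus S$ requires Sierpi\'nski's theorem (or an inner-regularity argument), and one must take care that these partitions live inside $\Omega \setminus S$ so that the $\nu$-mass computation is genuine. The earlier approximation step is essentially routine once Lemma \ref{aproksymacja} is in hand, and the final assembly is immediate.
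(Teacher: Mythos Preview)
Your proof is correct and follows a genuinely different route from the paper's. Both arguments begin the same way, invoking Lemma~\ref{aproksymacja} to upgrade the hypothesis from Lipschitz test functions to indicators; from there they diverge. The paper first treats the special case $\mu=\nu$, where $\mu(A)^{1/r}\le C\,\mu(A)^{1/p}$ forces the gap ``$\mu(A)=0$ or $\mu(A)\ge C^{-pr/(r-p)}$'' and Lemma~\ref{lemma:delty} finishes; it then reduces the general case to this one via the Lebesgue decomposition $\nu=\mu\llcorner\theta+\sigma$ and the substitution $\phi_n=\theta^{1/(r-p)}\mathbf{1}_{\theta\le n}\,\psi$, which transfers the reverse H\"older inequality to the auxiliary measures $\mu_n=\mu\llcorner(\theta^{r/(r-p)}\mathbf{1}_{\theta\le n})$, followed by a weak-$*$ limit $\tilde\mu_n\to\mu$. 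You instead exploit the two-measure inequality $\mu(A)\le C^{r}\nu(A)^{r/p}$ directly: partitioning the $\nu$-atomless part of $\Omega$ into $N$ pieces of equal $\nu$-mass via Sierpi\'nski's theorem and summing the superlinear bound shows $\mu(\Omega\setminus S)\le C^{r}\nu(\Omega\setminus S)^{r/p}N^{1-r/p}\to 0$. Your approach is shorter and conceptually cleaner, trading the Radon--Nikodym/Lebesgue machinery and the limit argument for one external fact (Sierpi\'nski); the paper's approach is more self-contained but technically heavier. One small caveat worth noting in your write-up: the claim that the Radon measures are finite on $\Omega$ is not automatic from precompactness alone (e.g.\ $x^{-1}\,dx$ on $(0,1)$), though the paper relies on the same tacit finiteness and it holds in the intended application.
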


\begin{proof} 
We divide the proof into two steps.\\

{\bf Step 1.} Assume that $\mu = \nu$, choose any measurable set $A$, and assume that (\ref{hypothesis}) holds;  by Lemma \ref{aproksymacja}
\[
  \| \mathbf{1}_{A} \|_{L^{r}(\Omega,\mu)} \leq C \| \mathbf{1}_{A} \|_{L^{p}(\Omega,\mu)},
\]
and then
\[
\mu(A)^{\frac{1}{r}} = \| \mathbf{1}_{A} \|_{L^r(\Omega,\mu)} \leq C \| \mathbf{1}_{A} \|_{L^p(\Omega,\mu)} = C \mu(A)^{\frac{1}{p}}.
\]
Hence either $\mu(A) = 0$, or
$\mu(A) \geq 1 / {C}^{\frac{pr}{r- p}}$.
Then by Lemma \ref{lemma:delty} there exists finite set $\{x_i \}_{i \in I}$ of points in $\Omega$ and  a finite set $\{ \mu_i \}_{i \in I}$ of real numbers with $\mu_i \geq  1/C^{\frac{pr}{r- p}}$ such that
\[
\mu = \sum_{i \in I} \mu_i \delta_{x_i}.
\]

{\bf Step 2.} Now assume that $\mu$ and $\nu$ are arbitrary; the Lebesgue Decomposition theorem ensures that
\begin{equation}    \label{decomposition}
\nu = \mu \llcorner \theta + \sigma
\end{equation}
for some non-negative $\theta$ in $L^1(\Omega,\mu)$, where 
$\mu \llcorner \theta (A) := \int_{A} \theta d \mu$, and $\sigma$ is a positive measure singular with respect to $\mu$. For every positive integer $n$ consider
the function 
\[
\phi_n := \theta^{\frac{1}{r - p}} \mathbf{1}_{\theta \leq n} \psi   ,
\]
where $ \psi $ is  Lipschitz  with compact support, and also the measure 
\[
\mu_n :=  \mu \llcorner ( \theta^{\frac{r}{r - p}} \mathbf{1}_{\theta \leq n} ).
\]

Assuming (\ref{hypothesis}) and recalling Lemma \ref{aproksymacja}, use the decomposition (\ref{decomposition}) to obtain
\begin{equation}   \label{ineq-rp}
\| \phi_n \|_{L^r(\Omega,\mu)} \leq C \| \phi_n \|_{L^p(\Omega,\nu)} = C \| \phi_n \|_{L^p(\Omega,  \mu \llcorner \theta + \sigma)} 
=  C \| \phi_n \|_{L^p(\Omega,  \mu \llcorner \theta )} . 
\end{equation}
However
\begin{equation}     \label{eq-p}
\| \phi_n \|^p_{L^p(\Omega,  \mu \llcorner \theta )} =
\int_{\Omega} \left| \psi \right|^p \theta^{\frac{p}{r-p}} \mathbf{1}_{\theta \leq n} \theta d \mu = 
\int_{\Omega} \left| \psi \right|^p \theta^{\frac{r}{r-p}} \mathbf{1}_{\theta \leq n} d \mu =
\| \psi \|^p_{L^p(\Omega , \mu_n)} ,
\end{equation}
and similarly
\begin{equation}    \label{eq-r}
\| \psi \|_{L^r(\Omega , \mu_n)} = \| \phi_n \|_{L^r(\Omega, \mu)} .
\end{equation}

Then use (\ref{eq-p}) and (\ref{eq-r}) in (\ref{ineq-rp})  to infer that
\[
\| \psi \|_{L^r(\Omega , \mu_n)} \leq C \| \psi \|_{L^p(\Omega ,  \mu_n)} 
\]
for every $n$.\\

Hence by Step 1 
\[
\mu_n = \sum_{i \in I_n} \mu_{n,i} \delta_{x_{n,i}}
\]
for every $n$. Recall the definition of the measures $\mu_n$, and 
note that $\hbox{spt} \mu_n \subset \hbox{spt} \mu_{n+1}$, in particular $I_n \subset I_{n+1}$. Let $I=\bigcup_{n=1}^{\infty}I_n$ and define $x_i : =x_{n,i} \big|_{I_n}$; one can write
\[
\mu_n = \sum_{i \in I_n} \mu_{n,i} \delta_{x_i}.
\]
Since $\mu_{n,i} =\mu_n (\{x_i\})\leq  \mu_{n+1} (\{x_i\})=\mu_{n+1,i} $, it follows that for each $i$ the number $\mu_{n,i}$ is non decreasing with respect to $n$.\\ 

Denote by $\mathcal{M}(\Omega)$ the set of measures on $\Omega$ endowed with the weak-$\ast$ topology. Let $\tilde{\mu}_n= \mu_n \llcorner (\theta^{-\frac{r}{r-p}}\mathbf{1}_{\{\theta>0\}})$, and observe that
 $\tilde{\mu}_n \rightarrow \mu \llcorner \mathbf{1}_{\{\theta>0\}}$
in $\mathcal{M}(\Omega)$. We claim that 
\[
 \tilde{\mu}_n \rightarrow  \mu .
\]
To prove that, it suffices to show that $\mu(\{\theta=0\})=0$. Since $\mu$ is singular with respect to $\sigma$, there exist subsets $A$ and  $B$ with $A \cap B =\emptyset$, such that for every measurable set $E$ we have $\mu(E)= \mu (A\cap E)$ and $\sigma(E)= \sigma (B\cap E)$. Therefore
\[
	\int_{\Omega}\mathbf{1}_A \mathbf{1}_{\{\theta=0\}} d \nu =\int_{\Omega}\mathbf{1}_A \mathbf{1}_{\{\theta=0\}} \theta d \mu + \int_{\Omega}\mathbf{1}_A \mathbf{1}_{\{\theta=0\}} d \sigma=0,
\]
hence $\nu (A\cap \{\theta=0\})=0$, and using (\ref{hypothesis})
\[
\| \mathbf{1}_{A\cap \{\theta=0\}} \|_{L^r(\Omega,\mu)} \leq C \| \mathbf{1}_{A\cap \{\theta=0\}} \|_{L^{p}(\Omega,\nu)}=0.
\]
Thus $\mu ( \{\theta=0\})=\mu (A\cap \{\theta=0\})=0$, as required.
\end{proof}

%%%%%%GENERALIZED-GRADIENT%%%%%
Now we continue with Lemma \ref{prod}:

\begin{lemma} (Haj\l{}asz-Leibniz) \label{prod}
If $v \in M^p_1(X,\mu)$ and $\phi \in \hbox{Lip}(X)$, then $f = v \phi \in M^p_1(X,\mu)$. Moreover,
 \[
g_f = g_v |\phi| + |v|\|\phi\|_{\hbox{Lip}}
 \]
is a Haj\l{}asz gradient for $v \phi$.
\end{lemma}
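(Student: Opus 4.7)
The plan is to verify the defining Haj{\l}asz inequality
\[
|v(x)\phi(x) - v(y)\phi(y)| \leq d(x,y)\bigl(g_f(x) + g_f(y)\bigr)
\]
for $\mu$-almost every pair $(x,y)$ with the proposed $g_f = g_v|\phi| + |v|\,\|\phi\|_{\hbox{Lip}}$, and then to check that $g_f$ belongs to $L^p(X,\mu)$. I would begin from the Leibniz-type identity
\[
v(x)\phi(x) - v(y)\phi(y) = \phi(x)\bigl(v(x) - v(y)\bigr) + v(y)\bigl(\phi(x) - \phi(y)\bigr),
\]
apply the triangle inequality, and then substitute the Haj{\l}asz bound for $v$ together with the global Lipschitz estimate $|\phi(x) - \phi(y)| \leq \|\phi\|_{\hbox{Lip}}\,d(x,y)$. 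This immediately produces the asymmetric bound
\[
|v(x)\phi(x) - v(y)\phi(y)| \leq d(x,y)\bigl[|\phi(x)|\bigl(g_v(x) + g_v(y)\bigr) + |v(y)|\,\|\phi\|_{\hbox{Lip}}\bigr].
\]

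Combining this with the companion bound obtained by interchanging $x$ and $y$, I would next use the elementary estimate $\bigl||\phi(x)| - |\phi(y)|\bigr| \leq \|\phi\|_{\hbox{Lip}}\,d(x,y)$ to absorb the remaining mixed cross terms of the form $|\phi(x)|g_v(y)$ into the diagonal terms of $g_f(x) + g_f(y)$, arriving at the symmetric bound as claimed. Having checked the gradient inequality, membership $f \in M^p_1(X,\mu)$ follows by noting that $\phi$ is bounded on $X$ (it is Lipschitz on a compact space, which is the setting relevant for Theorem \ref{main2}), so that $g_f \leq \|\phi\|_{L^\infty(X)}\,g_v + \|\phi\|_{\hbox{Lip}}\,|v|$ lies in $L^p(X,\mu)$, and likewise $v\phi \in L^p(X,\mu)$.

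The main obstacle I anticipate is the bookkeeping around the mixed cross terms $|\phi(x)|g_v(y)$ that appear after applying the Haj{\l}asz inequality to $v$: these do not immediately fit into the symmetric form $g_f(x) + g_f(y)$, and arranging their absorption cleanly into the stated form of $g_f$ is the delicate point of the manipulation. Beyond that, the argument is an elementary algebraic identity followed by a routine $L^p$ estimate.
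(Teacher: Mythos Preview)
Your overall strategy --- the Leibniz splitting, then applying the Haj\l{}asz inequality for $v$ and the Lipschitz bound for $\phi$ --- is exactly the paper's. The point where your proposal goes wrong is the handling of the cross term $|\phi(x)|\,g_v(y)$: you propose to absorb it using $\bigl||\phi(x)|-|\phi(y)|\bigr|\le\|\phi\|_{\hbox{Lip}}\,d(x,y)$, but this does not work. Writing $|\phi(x)|\,g_v(y)=|\phi(y)|\,g_v(y)+(|\phi(x)|-|\phi(y)|)\,g_v(y)$ and bounding the second piece by $\|\phi\|_{\hbox{Lip}}\,d(x,y)\,g_v(y)$ leaves you with a term carrying an \emph{extra} factor of $d(x,y)$; after the overall $d(x,y)$ already in front, that contribution is of order $d(x,y)^2$ and cannot be folded into $d(x,y)\bigl(g_f(x)+g_f(y)\bigr)$. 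More directly, the inequality you would need is $|\phi(x)|\,g_v(y)\le |\phi(y)|\,g_v(y)+|v(x)|\,\|\phi\|_{\hbox{Lip}}$, and there is no relation between $g_v(y)$ and $|v(x)|$ that delivers this from the Lipschitz oscillation of $\phi$ alone.

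The paper sidesteps the issue by a $\min/\max$ device that makes that Lipschitz estimate unnecessary. From the two Leibniz decompositions (yours and the one with $x,y$ interchanged) one gets
\[
|v(x)\phi(x)-v(y)\phi(y)|\le |v(x)-v(y)|\,\min\{|\phi(x)|,|\phi(y)|\}+\max\{|v(x)|,|v(y)|\}\,|\phi(x)-\phi(y)|.
\]
After inserting the Haj\l{}asz and Lipschitz bounds, the first term is controlled via $\min\{|\phi(x)|,|\phi(y)|\}\bigl(g_v(x)+g_v(y)\bigr)\le g_v(x)|\phi(x)|+g_v(y)|\phi(y)|$, and the second via $\max\{|v(x)|,|v(y)|\}\le |v(x)|+|v(y)|$, giving exactly $d(x,y)\bigl(g_f(x)+g_f(y)\bigr)$ with no residual cross terms. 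Your instinct to combine the two companion bounds is right; the fix is simply to take their \emph{minimum} (equivalently, pick whichever decomposition puts $\min\{|\phi(x)|,|\phi(y)|\}$ in front of $g_v(x)+g_v(y)$) rather than to invoke the Lipschitz oscillation of $|\phi|$.
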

\begin{proof}
The result follows from the string of inequalities
 \begin{eqnarray*}
  |v(x)\phi(x) - v(y)\phi(y)| &\leq& |v(x)-v(y)|\ \min\{|\phi(x)|,|\phi(y)|\} + \max \{|v(x)|,|v(y)| \}\ |\phi(x)-\phi(y)|\\
  &\leq& \big( g_v(x)+g_v(y) \big)\  \min \{ |\phi(x)|,|\phi(y)|\}\ 
d(x,y) + \max \{ |v(x)|,|v(y)| \}\ \|\phi\|_{\hbox{Lip}}\ d(x,y)\\
  &\leq& \big( g_v(x)|\phi(x)|+g_v(y)|\phi(y)| \big)\ d(x,y) + (|v(x)|+|v(y)|)\ \|\phi\|_{\hbox{Lip}}\ d(x,y) \\
& = & d(x,y)\ ( g_{v \phi}(x) + g_{v \phi}(y) ),
\end{eqnarray*}
with  $g_{v \phi} := g_v |\phi| + |v|\|\phi\|_{\hbox{Lip}} .$
\end{proof}

%%%%%%%%%%%%BREZIS-LIEB%%%%%%%%%%%%%%%%

Finally, before stating Theorem \ref{theorem:Lions}, we recall the following Lemma attributed to H. Br\'ezis and E. Lieb:

\begin{lemma} \label{BL}
Let $p \in [1, \infty)$. If $f_n \rightarrow f$ weakly in $L^{p}(X,\mu)$ and $f_n \rightarrow f$ $\mu$-almost everywhere, then
\[
\lim_{n \rightarrow \infty} \left(  \int_{X}  |f_n|^{p} d\mu - \int_{X}  |f_n - f|^{p} d \mu \right) = \int_{X}  |f|^{p} d \mu.
\]
\end{lemma}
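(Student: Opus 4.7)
The plan is to carry out the standard Br\'ezis--Lieb argument, which combines a pointwise algebraic inequality with the Dominated Convergence theorem. The key ingredient is the elementary estimate: for every $\epsilon > 0$ there exists a constant $C_\epsilon > 0$ such that for all $a, b \in \mathbb{R}$,
\begin{equation*}
\bigl| |a+b|^p - |a|^p \bigr| \leq \epsilon |a|^p + C_\epsilon |b|^p.
\end{equation*}
This follows from convexity (case $|b| \leq |a|$, where one Taylor-expands and uses Young's inequality) and a direct bound by $|b|^p$ in the complementary regime. Applying it with $a = f_n - f$ and $b = f$ (so $a + b = f_n$) and using the triangle inequality to bring in a $|f|^p$ term yields the pointwise bound
\begin{equation*}
\bigl| |f_n|^p - |f_n - f|^p - |f|^p \bigr| \leq \epsilon |f_n - f|^p + (1 + C_\epsilon) |f|^p.
\end{equation*}

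For each $\epsilon > 0$ and each $n$, I would then introduce the truncation
\begin{equation*}
W_{n,\epsilon} := \Bigl( \bigl| |f_n|^p - |f_n - f|^p - |f|^p \bigr| - \epsilon |f_n - f|^p \Bigr)^{+}.
\end{equation*}
By the displayed estimate, $0 \leq W_{n,\epsilon} \leq (1 + C_\epsilon) |f|^p$, which is $\mu$-integrable since weak convergence in $L^p$ forces $f \in L^p(X,\mu)$. The hypothesis $f_n \to f$ $\mu$-almost everywhere, together with continuity of $t \mapsto |t|^p$, gives $W_{n,\epsilon} \to 0$ pointwise a.e., so the Dominated Convergence theorem yields $\int_X W_{n,\epsilon}\, d\mu \to 0$ as $n \to \infty$.

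To finish, I would integrate the bound $\bigl| |f_n|^p - |f_n - f|^p - |f|^p \bigr| \leq W_{n,\epsilon} + \epsilon |f_n - f|^p$ to obtain
\begin{equation*}
\int_X \bigl| |f_n|^p - |f_n - f|^p - |f|^p \bigr|\, d\mu \leq \int_X W_{n,\epsilon}\, d\mu + \epsilon \|f_n - f\|_{L^p(X,\mu)}^p.
\end{equation*}
The weak convergence of $\{f_n\}$ in $L^p$ implies $\sup_n \|f_n\|_{L^p} < \infty$ by the uniform boundedness principle, so $\|f_n - f\|_{L^p}^p \leq M$ uniformly in $n$. Taking $\limsup_{n \to \infty}$ gives $\limsup_n \int_X \bigl| |f_n|^p - |f_n - f|^p - |f|^p \bigr|\, d\mu \leq \epsilon M$, and letting $\epsilon \to 0^{+}$ yields the conclusion. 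The delicate point in this plan is the \emph{$\epsilon$-trick}: the term $\epsilon |f_n - f|^p$ has no integrable pointwise dominator available, so one cannot apply dominated convergence to the raw difference $\bigl| |f_n|^p - |f_n - f|^p - |f|^p \bigr|$ directly; instead, the truncation $W_{n,\epsilon}$ isolates the dominated piece, while the remainder is absorbed via the uniform $L^p$-bound, and the two-parameter limit in $(n,\epsilon)$ closes the argument.
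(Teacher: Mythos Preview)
Your argument is correct and is precisely the classical Br\'ezis--Lieb proof: the pointwise inequality, the $\epsilon$-truncation $W_{n,\epsilon}$, dominated convergence for $W_{n,\epsilon}$, and the uniform $L^p$-bound from weak convergence all combine exactly as you describe. One minor remark: the paper itself does not give a proof of this lemma at all---it simply recalls the statement and attributes it to Br\'ezis and Lieb---so there is no alternative approach to compare against; you have supplied what the paper omits.
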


With those results at hand, we have:

%%%%%%%%%%%%%%%%%%%%%%

\begin{theorem} \label{theorem:Lions}
If $(X, d, \mu)$ is an Ahlfors lower $s$-regular compact metric-measure space, and $1 < p < s$,
then for every sequence $\left\{ u_n \right\}$ in $M^p_1(X,\mu)$ such that
$u_n \rightarrow u$ weakly in $M^p_1(X,\mu)$ and $u_n \rightarrow u$ strongly in $L^p(X,\mu)$, there exists a 
subsequence $\left\{ u_n \right\}$ and a countable set $I$ such that
\begin{equation} \label{eq:form}
\mu \llcorner |u_n|^{p^*} \to \mu \llcorner |u|^{p^*}  + \sum_{i \in I} \mu_i \delta_{x_i}
\end{equation}
in $\mathcal{M} (X)$, where $x_i \in X$ for every $i \in I$.
\end{theorem}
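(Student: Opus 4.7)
I would follow the classical P.-L. Lions first concentration-compactness scheme, adapted to the Haj{\l}asz setting. Set $v_n := u_n - u$, so that $v_n \rightharpoonup 0$ in $M^p_1(X,\mu)$ and $v_n \to 0$ in $L^p(X,\mu)$; passing to a subsequence I may also assume $v_n \to 0$ $\mu$-almost everywhere. A valid Haj{\l}asz gradient of $v_n$ is $g_{v_n} := g_{u_n} + g_u$, which is bounded in $L^p(X,\mu)$; and by Proposition \ref{wlozenie}, $\{u_n\}$ is bounded in $L^{p^*}(X,\mu)$. Since $X$ is compact, Banach-Alaoglu in $\mathcal{M}(X) \cong C(X)^{\ast}$ allows, along a further subsequence,
\[
\mu \llcorner |u_n|^{p^*} \rightharpoonup \lambda, \qquad \mu \llcorner g_{v_n}^p \rightharpoonup \rho
\]
weak-$\ast$ for some positive Radon measures $\lambda$ and $\rho$. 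Applying Lemma \ref{BL} to the sequence $\phi^{1/p^*} u_n$ for each nonnegative continuous $\phi$ shows that $\nu := \lambda - \mu \llcorner |u|^{p^*}$ is itself a positive measure, equal to the weak-$\ast$ limit of $\mu \llcorner |v_n|^{p^*}$.

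The heart of the argument is to establish, for every Lipschitz $\phi$ with compact support, the reverse H\"{o}lder-type inequality
\[
\left( \int_X |\phi|^{p^*} d\nu \right)^{1/p^*} \leq C \left( \int_X |\phi|^p d\rho \right)^{1/p},
\]
with $C$ the constant from Proposition \ref{wlozenie}. To see this, observe that by Lemma \ref{prod} the function $\phi v_n$ lies in $M^p_1(X,\mu)$ with Haj{\l}asz gradient dominated by $|\phi| g_{v_n} + \|\phi\|_{\hbox{Lip}} |v_n|$; Proposition \ref{wlozenie} then yields
\[
\|\phi v_n\|_{L^{p^*}(\mu)} \leq C \bigl( \|\phi v_n\|_{L^p(\mu)} + \| |\phi| g_{v_n}\|_{L^p(\mu)} + \|\phi\|_{\hbox{Lip}} \|v_n\|_{L^p(\mu)} \bigr).
\]
The two terms involving $v_n$ without a gradient vanish as $n \to \infty$ by the strong $L^p$-convergence of $v_n$; meanwhile $\||\phi| g_{v_n}\|_{L^p(\mu)}^p = \int_X |\phi|^p g_{v_n}^p d\mu \to \int_X |\phi|^p d\rho$ and $\|\phi v_n\|_{L^{p^*}(\mu)}^{p^*} \to \int_X |\phi|^{p^*} d\nu$, because $|\phi|^p$ and $|\phi|^{p^*}$ are bounded continuous test functions. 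Passing to the limit gives the claimed inequality.

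At this point Lemma \ref{lemma:rhol}, applied with $\Omega = X$, $r = p^*$, and with the roles of the measures $\mu$ and $\nu$ in that lemma played respectively by our $\nu$ and $\rho$, yields $\nu = \sum_{i \in I} \mu_i \delta_{x_i}$ for some countable $I$ and points $x_i \in X$. Substituting back into $\lambda = \mu \llcorner |u|^{p^*} + \nu$ gives precisely (\ref{eq:form}).

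\textbf{Main obstacle.} The crux is engineering two coupled weak-$\ast$ limits $\lambda$ and $\rho$ so that the limiting inequality produced by the Haj{\l}asz-Sobolev embedding matches exactly the hypothesis of Lemma \ref{lemma:rhol}. This depends on (a) the Leibniz-type estimate of Lemma \ref{prod}, which splits $g_{\phi v_n}$ into a dominant term $|\phi| g_{v_n}$ plus an error controlled in $L^p$ by $v_n$ itself; and (b) Lemma \ref{BL}, which identifies the concentration measure $\nu$ cleanly as $\lambda - \mu \llcorner |u|^{p^*}$. Once these ingredients are aligned, the atomic decomposition drops out of Lemma \ref{lemma:rhol}.
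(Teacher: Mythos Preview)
Your argument is correct and follows essentially the same route as the paper: both set $v_n=u_n-u$, pass to subsequential weak-$\ast$ limits of $|v_n|^{p^*}\,d\mu$ and $g_{v_n}^p\,d\mu$, combine Proposition~\ref{wlozenie} with the Leibniz estimate of Lemma~\ref{prod} to obtain the reverse H\"older inequality, invoke Lemma~\ref{lemma:rhol} for the atomic structure, and use Lemma~\ref{BL} to relate the limit of $|u_n|^{p^*}\,d\mu$ to that of $|v_n|^{p^*}\,d\mu$. The only differences are notational and in the order of presentation (you apply Br\'ezis--Lieb before the reverse H\"older step rather than after), and you are in fact slightly more explicit than the paper about extracting the a.e.\ convergent subsequence needed for Lemma~\ref{BL} and about the choice of $g_{v_n}$.
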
 

\begin{proof}

We begin with two observations:
\begin{enumerate}
\item Let $v_n:= u_n - u$, and fix some $\phi$ in $\hbox{Lip}_c(X)$. The hypotheses, Proposition \ref{wlozenie} and Lemma \ref{prod} when applied to $f_n := v_n \phi$  give
\begin{equation}  \label{istot}
\|v_n \phi\|_{L^{p^*}(X,\mu)} \leq C\left(\|v_n \phi\|_{L^{p}(X,\mu)}+\| g_{v_n} \phi\|_{L^{p}(X,\mu)} + 
\|\phi\|_{\hbox{Lip}}  \| v_n \|_{L^{p}(X,\mu)}\right) .
\end{equation}

\item The hypotheses also imply that
\begin{itemize} 
\item $\|v_n \phi\|_{L^{p}(X,\mu)} \to 0$ and $\| v_n \|_{L^{p}(X,\mu)}  \to 0$, 
\item $\mu \llcorner  |v_n|^{p^{\ast}} \to \bar{\mu}$ and $\mu \llcorner |g_{v_n}|^{p} \to \nu$ for some $\bar{\mu}$ and $\nu$ in 
$\mathcal{M}(X)$.
\end{itemize}
\end{enumerate}

Those observations yield the reverse H\"older inequality
\[
\| \phi \|_{L^{p^*}(X,\bar{\mu})} \leq C \| \phi \|_{L^{p}(X,\nu)} ,
\]
and Lemma \ref{lemma:rhol} ensures that the mesure $\bar{\mu}$ has the form
\begin{equation} \label{eq:suma}
\bar{\mu} = \sum_{i \in I} \mu_i \delta_{x_i}.
\end{equation}

Now use Lemma \ref{BL} when $f_n=u_n \phi^{\frac{1}{p^*}}$, where $\phi$ is a non-negative function in $C_c(X)$, and
\[
\lim_{n \rightarrow \infty} \left(  \int_{X} \phi |u_n|^{p^{*}} d \mu - \int_{X}  \phi  |v_n|^{p^{*}} d \mu \right) =
\int_{X}  \phi  |u|^{p^{*}} d \mu
\]
follows. Then recall that $\mu \llcorner |v_n|^{p^{*}} \to \bar{\mu}$ in 
$\mathcal{M}(X)$, to infer
\begin{equation} \label{eq:pos}
\lim_{n \rightarrow \infty} \int_X \phi |u_n|^{p^{*}} d \mu  =  \int_X \phi\   d \bar{\mu} + \int_X \phi |u|^{p^{*}}  d \mu.
\end{equation}

Since every continuous function of compact support, say $\phi$, can be written as $\phi=\phi_{+}-\phi_{-}$, where $\phi_{+}$ and $\phi_{-}$ are non-negative with compact support, one  concludes that
(\ref{eq:pos}) holds for every $\phi$ in the dual of $\mathcal{M}(X)$. Now use (\ref{eq:suma}), to obtain (\ref{eq:form}). 
\end{proof}

%%%%%%%%%%%%%%%%%%%%%%%%%%%
%%%%%%%%%%%%%Main-Result%%%%%%%%
%%%%%%%%%%%%%%%%%%%%%%%%%

Now we can prove  Theorem \ref{main2}, the main result in this work.
\begin{proof}

By the hypotheses, whenever $h \in H$ one has $h_{\#} \mu = \mu$. Let $\{ u_n \}$ be a bounded sequence in $M^p_{1,H}(X,\mu)$, namely a bounded sequence in $M^p_1(X,\mu)$ such that $h^{\#} u_n = u_n$ for each $n$ and each $h$ in $H$.
Then the sequence of measures $\{ \mu_n \}$ defined by
\[
\mu_n := \mu \llcorner |u_n|^{p^{\ast}}
\]
is also $H$-invariant.

On the other hand, if the sequence $\{ u_n \}$ converges weakly to some $u$ in
$M_{1,H}^p(X,\mu)$, then\footnote{$M_1^p(X,\mu)$ is reflexive in the hypotheses of the theorem.} by Theorem \ref{theorem:Lions} there exists a subsequence\footnote{We use the same subindex for sequences and the pertinent subsequences.} of $\{ \mu_n \}$ such that 
\begin{equation}   \label{measure-inv}
\mu_n \to \mu \llcorner |u|^{p^{\ast}} + \sum_{i \in I} \mu_i \delta_{x_i}
\end{equation}
in $\mathcal{M}(X)$, where $I$ is at most countable. 

In addition, it is not difficult to see that if $\{ \mu_n \}$ is a sequence of $H$-invariant measures converging to some $\nu$ in $\mathcal{M}(X)$, then $\nu$ is also $H$-invariant; therefore from (\ref{measure-inv}) the measure
\[
\mu \llcorner |u|^{p^{\ast}} +  \sum_{i \in I} \mu_i \delta_{x_i}
\]
is $H$-invariant. Moreover, since $\mu \llcorner |u|^{p^{\ast}}$ is $H$-invariant, then
$\sigma := \sum_{i \in I} \mu_i \delta_{x_i}$ is $H$-invariant as well. \\

Choose some $k$ in $I$, and let $y = h (x_k)$ be some element in $H(x_k)$. Then
\[
\mu_k = \sigma(x_k) = \sigma(h^{-1}(y)) = h_{\#} \sigma(y) = \sigma(y) = \sum_{i \in I} \mu_i \delta_{x_i}(y) ,
\]
hence $x_i = y$ for some $i \in I$. This gives a contradiction, since $I$
is at most countable, meanwhile the orbit of each point in $X$ is uncountable by hypothesis. It follows that 
\[
\mu \llcorner |u_n|^{p^{\ast}} \to \mu \llcorner |u|^{p^{\ast}}
\]
in $\mathcal{M}(X)$; but this is equivalent to say that
\begin{equation}   \label{M(X)}
\| \phi u_n \|_{L^{p^{\ast}}(X,\mu)} \to \| \phi u \|_{L^{p^{\ast}}(X,\mu)}
\end{equation}
whenever $\phi \in C_c(X)$. \\

Since $X$ is compact, we can use $\phi = 1$ in (\ref{M(X)}), to conclude that if $\{u_n \}$ is a bounded sequence in $M^p_{1,H}(X,\mu)$ converging weakly to some $u$ in $M^p_{1,H}(X,\mu)$, then
\[
\| u_n \|_{L^{p^{\ast}}(X,\mu)} \to \|  u \|_{L^{p^{\ast}}(X,\mu)}
\]
for some subsequence. But $L^{p^{\ast}}(X,\mu)$ is uniformly convex, hence $u_n \to u$ in $L^{p^{\ast}}(X,\mu)$.
\end{proof}

A useful consequence of Theorem \ref{main2} is:

\begin{corol}   \label{corollary}
Using the same hypotheses as in Theorem \ref{main2}, define the constant $C$ by
\[
C:= \inf \{\ A > 0\ :\ \| u \|_{L^{p^{\ast}}(X,\mu)} \leq A \| u \|_{M^p_1(X,\mu)}\ \text{whenever}\ u \in  M^p_{1,H}(X,\mu)\      \}.
\]
Then there exists some $u_0$ in $M^p_{1,H}(X,\mu)$ such that
\[
C = \| u_0 \|_{L^{p^{\ast}}(X,\mu)}\ /\ \| u_0 \|_{M^p_{1,H}(X,\mu)} .
\]
\end{corol}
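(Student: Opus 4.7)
The plan is a direct variational argument using Theorem \ref{main2} as the key compactness ingredient. First I would verify that $C$ is well-defined: finiteness is immediate from Proposition \ref{wlozenie}, and strict positivity is trivial (any non-zero $H$-invariant function, e.g.\ a constant, produces a positive ratio $\|u\|_{L^{p^*}(X,\mu)}/\|u\|_{M^p_1(X,\mu)}$). By homogeneity, $C$ equals the supremum
\[
C = \sup\Big\{\ \|u\|_{L^{p^*}(X,\mu)} \ :\ u \in M^p_{1,H}(X,\mu),\ \|u\|_{M^p_1(X,\mu)} = 1\ \Big\}.
\]

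Next I would select a maximizing sequence $\{u_n\} \subset M^p_{1,H}(X,\mu)$ with $\|u_n\|_{M^p_1(X,\mu)} = 1$ and $\|u_n\|_{L^{p^*}(X,\mu)} \to C$. Since $\mu$ is $H$-invariant, for each $h \in H$ the composition operator $u \mapsto u \circ h$ is a linear isometry of $L^p(X,\mu)$, so the set $\{u \in M^p_1 : u \circ h = u\ \mu\text{-a.e.}\}$ is closed in $M^p_1(X,\mu)$ via the continuous inclusion $M^p_1 \hookrightarrow L^p$. Intersecting over $h \in H$ shows $M^p_{1,H}(X,\mu)$ is a closed linear (hence convex, hence weakly closed) subspace of the reflexive space $M^p_1(X,\mu)$, and is itself reflexive. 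Extract a subsequence (not relabelled) with $u_n \rightharpoonup u_0$ weakly in $M^p_1(X,\mu)$; then $u_0 \in M^p_{1,H}(X,\mu)$.

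Now apply Theorem \ref{main2}: the compact embedding gives $u_n \to u_0$ strongly in $L^{p^*}(X,\mu)$, so $\|u_0\|_{L^{p^*}(X,\mu)} = \lim_n \|u_n\|_{L^{p^*}(X,\mu)} = C$. In particular $u_0 \neq 0$ because $C > 0$. Weak lower semicontinuity of the norm gives $\|u_0\|_{M^p_1(X,\mu)} \leq \liminf_n \|u_n\|_{M^p_1(X,\mu)} = 1$. On the other hand, the defining inequality forces $C = \|u_0\|_{L^{p^*}(X,\mu)} \leq C\|u_0\|_{M^p_1(X,\mu)}$, so $\|u_0\|_{M^p_1(X,\mu)} \geq 1$ and therefore $\|u_0\|_{M^p_1(X,\mu)} = 1$. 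The identity $C = \|u_0\|_{L^{p^*}(X,\mu)}/\|u_0\|_{M^p_1(X,\mu)}$ follows.

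The only delicate point in the above is the weak closedness of $M^p_{1,H}(X,\mu)$, which I expect to be the main obstacle to write carefully; it is handled via the observation that $H$-invariance is a family of continuous linear constraints on $M^p_1(X,\mu)$. Everything else is the standard direct method: Theorem \ref{main2} supplies compactness to pass to the limit in the numerator, weak lower semicontinuity controls the denominator, and optimality of $C$ rules out loss of mass.
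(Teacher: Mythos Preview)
Your proof is correct and follows essentially the same direct-method argument as the paper: both use Theorem \ref{main2} for compactness in $L^{p^*}$ and weak lower semicontinuity of the $M^p_1$-norm. The only cosmetic difference is that the paper normalizes $\|u\|_{L^{p^*}}=1$ and minimizes $\|u\|_{M^p_1}$ (working with $D=1/C$), whereas you normalize $\|u\|_{M^p_1}=1$ and maximize $\|u\|_{L^{p^*}}$; your version also spells out the weak closedness of $M^p_{1,H}(X,\mu)$, which the paper takes for granted.
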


\begin{proof}
Define the functional 
$\mathcal{I} :  M^p_{1,H}(X,\mu) \thicksim  \{ 0\} \to  \mathbb{R}$ by
\[
\mathcal{I}[u] := \|  u \|_{M^p_1(X,\mu)} ,
\]
and set
\[
D := \inf \{\ \mathcal{I}[u]\ :\ u \in M^p_{1,H}(X,\mu)\ ,\   \| u \|_{L^{p^{\ast}}(X,\mu)} = 1   \ \} .
\]
Let $\{ u_n \}$ be a minimizing sequence, i.e. such that $u_n \in M^p_{1,H}(X,\mu)$ and $ \| u_n \|_{L^{p^{\ast}}(X,\mu)} = 1$ for every $n$, with $\mathcal{I}[u_n] \to D$. Since $\{ u_n \}$ is bounded in 
$M^p_{1,H}(X,\mu)$, by Theorem \ref{main2} there is a subsequence\footnote{As in Theorem \ref{main2}, we use the same subindex for the sequence and the pertinent subsequence.} of $\{ u_n \}$ and some $w$ in $M^p_{1,H}(X,\mu)$ such that
\[
u_n \to w\ \text{weakly in}\ M^p_{1,H}(X,\mu), 
\] 
\[
\text{and}\ u_n \to w\ \text{strongly in}\ L^{p^{\ast}}(X,\mu).
\]

But $\| w \|_{L^{p^{\ast}}(X,\mu)} = 1$ by strong convergence in $L^{p^{\ast}}(X,\mu)$, hence
\[
D = \lim_{n \to \infty} \mathcal{I}[u_n] = \liminf_{n \to \infty} \|  u_n \|_{M^p_1(X,\mu)} \geq \|  w \|_{M^p_1(X,\mu)} = \mathcal{I}[w] \geq D.
\]

Therefore $\mathcal{I}[w] = D$, and it follows that $C = 1/D$, with $u_0 = w$.
\end{proof}

%%%%%%%%%%%%%%%%%%%%%%%%%%%%
%%%%%%%%%%%%%%%%%%%%%%%%%%%%%%%%%%%%%%%%%%%%%%%%%%%%%%%%%%%%%%%%%%%%%%%%%%%%%%%%%%%%%%%%%%%%%%%%%%%%%%%%%%%%%%%%%%%%%%%%%%%%%%%%%%%%%%%%%%%%%%%%%%%%%%%%%%%
%%%%%%%%%%%%%%%%%%%%%%%%%%%%%%%%%%%%%%%%%%%%%%%%%%%%%%%%%%%%%%

\section{Riemannian applications}    \label{examples}

The next result is not surprising and probably not new, however we could not find it in the literature. To satisfy the interested reader, and justify the discussion  in Section \ref{final} below, we give a proof of it with some details.

\begin{theorem} \label{riem-mm} Suppose $(X, g)$ is a compact Riemannian $n$-manifold. Then for every $p$ such that $1 < p  < \infty$ the spaces $L^p_1(X, V_g)$ and $M^p_1(X, V_g)$ coincide with equivalent norms.
\end{theorem}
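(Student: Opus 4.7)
The plan is to localize via a finite bi-Lipschitz atlas and invoke the Euclidean equivalence (Haj\l{}asz's theorem) that the Haj\l{}asz-Sobolev space on $\mathbb{R}^n$ coincides with the classical Sobolev space for every $1 < p < \infty$. Since $(X,g)$ is smooth and compact, fix a finite cover $\{U_i\}$ of $X$ by coordinate charts $\varphi_i\colon U_i \to V_i \subset \mathbb{R}^n$ of uniformly bounded distortion, so that on each $U_i$ the Riemannian distance $d_g$ is bi-Lipschitz to the pulled-back Euclidean distance and $V_g$ is comparable to the pulled-back Lebesgue measure; fix also a Lipschitz partition of unity $\{\eta_i\}$ subordinate to this cover.

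For the inclusion $M^p_1(X,V_g) \hookrightarrow L^p_1(X,V_g)$, given $f \in M^p_1(X,V_g)$, Lemma \ref{prod} shows that each product $\eta_i f$ again lies in $M^p_1(X,V_g)$, with Haj\l{}asz gradient controlled by that of $f$. Transported via $\varphi_i$ to the Euclidean chart, $\eta_i f$ becomes a compactly supported Haj\l{}asz-Sobolev function, hence classical Sobolev by the Euclidean equivalence, and admits a weak gradient in $L^p$ bounded in norm by its Haj\l{}asz gradient. Pulling back and summing in $i$ gives a weak gradient of $f$ on $(X,g)$ with $\|\nabla f\|_{L^p} \lesssim \|f\|_{M^p_1}$.

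The reverse inclusion $L^p_1(X,V_g) \hookrightarrow M^p_1(X,V_g)$ requires building a single global Haj\l{}asz gradient for $f \in L^p_1(X,V_g)$. Pick $\delta > 0$ smaller than the Lebesgue number of the cover and split pairs of points according to whether $d_g(x,y) < \delta$. For close pairs, both points lie in a common chart, and the standard Euclidean pointwise estimate $|u(x)-u(y)| \le C|x-y|(\mathcal{M}|\nabla u|(x)+\mathcal{M}|\nabla u|(y))$ transfers via the bi-Lipschitz comparison to
\[
|f(x)-f(y)| \le C_1\,d_g(x,y)\bigl(\mathcal{M}|\nabla f|(x) + \mathcal{M}|\nabla f|(y)\bigr),
\]
where $\mathcal{M}$ is the Hardy--Littlewood maximal operator on the doubling space $(X,d_g,V_g)$. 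For far pairs, the trivial bound $|f(x)-f(y)| \le |f(x)|+|f(y)| \le \delta^{-1}d_g(x,y)(|f(x)|+|f(y)|)$ suffices. Hence $G := C_1\,\mathcal{M}|\nabla f| + \delta^{-1}|f|$ is a Haj\l{}asz gradient for $f$; since $p > 1$ and $(X,d_g,V_g)$ is doubling, the Hardy--Littlewood maximal inequality places $\mathcal{M}|\nabla f|$ in $L^p$, so $G \in L^p(X,V_g)$ with $\|G\|_{L^p} \lesssim \|f\|_{L^p_1}$.

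The main obstacle is the second inclusion, specifically transferring the Euclidean pointwise maximal-function estimate to the manifold for close pairs. This rests on uniform local Poincar\'e inequalities, valid on every smooth compact Riemannian manifold, together with boundedness of the maximal operator on $L^p$ in the doubling metric-measure space $(X,d_g,V_g)$ for $p > 1$. Both tools are classical, so the proof is technically routine and uses in an essential way only the smoothness and compactness that let one view $X$ as locally Euclidean.
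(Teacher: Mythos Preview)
Your argument is correct but takes a different route from the paper for the nontrivial inclusion $L^p_1(X,V_g)\hookrightarrow M^p_1(X,V_g)$. The paper localizes with a partition of unity, transfers each $\eta_\alpha u$ to a Euclidean chart, and uses the Euclidean equivalence $M^p_1(\mathbb{R}^n)=L^p_1(\mathbb{R}^n)$ to obtain a Haj\l{}asz gradient $\tilde h_\alpha$ on $U_\alpha$; the sum $h=\sum_\alpha \tilde h_\alpha\mathbf{1}_{U_\alpha}$ is shown to be only a \emph{local} Haj\l{}asz gradient, so the paper first lands in the local space $m^p_1(X,V_g)$, proves the bound there for smooth $u$, extends by density of $C^\infty(X)$ in $L^p_1$, and then invokes Corollary~3.5 of \cite{JSYY} to identify $m^p_1=M^p_1$. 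Your near/far splitting with the maximal function instead produces a single global Haj\l{}asz gradient $G=C_1\,\mathcal{M}|\nabla f|+\delta^{-1}|f|$ in one stroke, working directly with an arbitrary $f\in L^p_1$ and bypassing both the density argument and the local space $m^p_1$. The trade-off is that you use the Hardy--Littlewood maximal theorem on doubling spaces and must check that the balls appearing in the pointwise inequality stay inside a chart (which your choice of $\delta$ below the Lebesgue number guarantees, since the Euclidean estimate really only needs the restricted maximal function $\mathcal{M}_{C|x-y|}$), whereas the paper is maximal-function-free but outsources the local-to-global step to \cite{JSYY}. For the easier inclusion $M^p_1\hookrightarrow L^p_1$ the paper simply cites Proposition~10.1 of \cite{HajlaszKoskela}; your sketch via Lemma~\ref{prod} and chartwise transfer is essentially that argument written out.
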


\begin{proof}
By Proposition 10.1 from  \cite{HajlaszKoskela}
\[
M^p_1(X,V_g) \hookrightarrow L^p_1(X,V_g),
\]
hence we need the opposite inclusion. Since $X$ is compact, there exists  a finite number of charts 
\[
\{(U_{\alpha},\phi_{\alpha})\ :\ \alpha \in A \}, 
\]
such that for every $\alpha$ 
the components $g_{i j}^\alpha$ of $g$ in $(U_{\alpha},\phi_{\alpha})$ satisfy 
\begin{eqnarray*}
 \frac{1}{2} \delta_{ij} \leq g^{\alpha}_{ij} \leq 2 \delta_{ij}
\end{eqnarray*}
as  bilinear forms. Let $\{ \eta_{\alpha} \}$ be smooth partition of unity subordinate to covering $\{ U_{\alpha} \}$. We proceed in two steps.\\

{\bf Step 1.} Let $\mathcal{L}^n$ be the $n$-dimensional Lebesgue measure, and fix $u$ in $C^{\infty}(X)$. Since $M^p_1(\mathbb{R}^n, \mathcal{L}^n)$ and $L^p_1(\mathbb{R}^n, \mathcal{L}^n)$ are equivalent, see \cite{Hajlasz} for example, there exists a constant $C > 1$ such that for every $\alpha$ in $A$
\begin{eqnarray}\label{ru}
\frac{1}{C} \| (\eta_{\alpha} u ) \circ \phi_{\alpha}^{-1} \|_{L^p_1(\mathbb{R}^n, \mathcal{L}^n)} \leq 
\| (\eta_{\alpha} u ) \circ \phi_{\alpha}^{-1} \|_{M^p_1(\mathbb{R}^n, \mathcal{L}^n)} \leq C \| (\eta_{\alpha} u ) \circ \phi_{\alpha}^{-1}\|_{L^p_1(\mathbb{R}^n,\mathcal{L}^n)}.
\end{eqnarray}
Furthermore
\[
    \int_X |\eta_{\alpha} u|^p d V_g =  \int_{U_{\alpha}} |\eta_{\alpha} u|^p d V_g  =  \int_{\phi_{\alpha}(U_{\alpha})} \sqrt{ \det g^{\alpha}_{ij}} \left| \eta_{\alpha} u \right|^{p} \circ \phi^{-1}_{\alpha}(x)\ d \mathcal{L}^n(x) ,  
\]
hence
\begin{eqnarray}  \label{niermodularfunkc}
    2^{-\frac{n}{2p}}\|\eta_{\alpha} u\|_{L^p (X,V_g)} \leq 
\| (\eta_{\alpha} u) \circ \phi^{-1}_{\alpha}\|_{L^p (\mathbb{R}^n,\mathcal{L}^n)} \leq 2^{\frac{n}{2p}}\|\eta_{\alpha} u\|_{L^p (X,V_g)} .  
\end{eqnarray}
On the other hand, we estimate the gradient locally by
\begin{eqnarray*}
\int_{X}  \left| \nabla (\eta_{\alpha} u) \right|^{p}  d V_{g}
&=& \int_{\phi_{\alpha} (U_{\alpha})} \sqrt{ \det g^{\alpha}_{ij}}\ \left| \sum_{k,j=1}^{n} g_{\alpha}^{kj} D_{k} ( (\eta_{\alpha} u ) \circ \phi^{-1}_{\alpha}) D_{j} ( (\eta_{\alpha} u ) \circ \phi^{-1}_{\alpha}) \right|^{p}  d\mathcal{L}^n \nonumber \\
&\geq& 2^{- \frac{n+p}{2}} \int_{\phi_{\alpha} (U_{\alpha})} \left| \nabla ( (\eta_{\alpha} u ) \circ \phi^{-1}_{\alpha}) \right|^{ p }  d\mathcal{L}^n,
\end{eqnarray*}
therefore
\begin{equation*} 
 \| \nabla ( \eta_{\alpha} u) \circ \phi_{\alpha}^{-1} \|_{L^p(\mathbb{R}^n, \mathcal{L}^n)} \leq 2^{\frac{n+p}{2p}}  \| \nabla( \eta_{\alpha} u) \|_{L^p(X,V_g)} 
\end{equation*}
for each $\alpha$ in $A$.\\

Set $\displaystyle C_0 := \max_{\alpha \in A} \| \nabla \eta_{\alpha} \|_{\infty}+1 $. Then 
\begin{equation} \label{eq:1}
 \| \nabla ( \eta_{\alpha} u) \circ \phi_{\alpha}^{-1} \|_{L^p(\mathbb{R}^n, \mathcal{L}^n)} \leq 2^{\frac{n+p}{2p}}  \left( \|  \nabla u \|_{L^p(X, V_g)} + C_0 \|  u \|_{L^p(X,V_g)} \right) 
 \leq 2^{\frac{n+p}{2p}}C_0 \| u \|_{L^p_1(X,V_g)}   \ .
\end{equation}

Fix some $\epsilon >0$; then there exists a Haj\l{}asz gradient 
$h_{\alpha}$ for $(\eta_{\alpha} u) \circ \phi_{\alpha}^{-1} $ in $\phi_{\alpha}(U_{\alpha})$, so that 
\begin{equation}    \label{eq:2}
\| (\eta_{\alpha} u) \circ \phi_{\alpha}^{-1} \|_{L^p(\mathbb{R}^n,\mathcal{L}^n)} + \| h_{\alpha} \|_{L^p(\mathbb{R}^n,\mathcal{L}^n)} - \epsilon \leq \| (\eta_{\alpha} u)  \circ \phi_{\alpha}^{-1}\|_{M^p_1(\mathbb{R}^n, \mathcal{L}^n)}.
\end{equation}
Gather inequalities (\ref{ru}), (\ref{niermodularfunkc}), (\ref{eq:1}) and (\ref{eq:2}) to get
\begin{eqnarray*}
 \| (\eta_{\alpha} u) \circ \phi_{\alpha}^{-1} \|_{L^p(\mathbb{R}^n,\mathcal{L}^n)} + \| h_{\alpha} \|_{L^p(\mathbb{R}^n,\mathcal{L}^n)} - \epsilon &\leq&  C \| (\eta_{\alpha} u) \circ \phi_{\alpha}^{-1} \|_{L^p_1(\mathbb{R}^n,\mathcal{L}^n)} \\
& \leq& C \left( 2^{\frac{n+p}{2p}}C_0 + 2^{\frac{n}{2p}} \right) \| u \|_{L^p_1(X,V_g)}.
\end{eqnarray*}

Observe that for each $\alpha$ the function $\sqrt{2} h_{\alpha} \circ \phi_{\alpha} =: \tilde{h}_{\alpha}: U_{\alpha} \rightarrow \mathbb{R}$ is a Haj\l{}asz gradient for $ (\eta_{\alpha} u) |_{U_{\alpha}}$. 
Indeed, since $h_{\alpha}$ is a Haj\l{}asz gradient for $(\eta_{\alpha} u) \circ \phi_{\alpha} ^{-1}\  \mathbf{1}_{\phi_{\alpha} (U_{\alpha})}$, there exists  a subset $E_{\alpha} \subset \mathbb{R}^n$ such that $\mathcal{L}^n(E_{\alpha})=0$, and such that for every pair $x,y \in \phi_{\alpha} (U_{\alpha}) \thicksim E_{\alpha}$
\[
	|\eta_{\alpha} u(\phi_{\alpha}^{-1}(x)) - \eta_{\alpha} u(\phi_{\alpha}^{-1}(y))| \leq \left( h_{\alpha}( x ) + h_{\alpha}(y) \right) |x - y|. 
\]
Therefore, for each pair $x,y$ in  $U_{\alpha} \thicksim \phi_{\alpha} ^{-1}(E_{\alpha})$
\begin{eqnarray}   \label{gradient}
|\eta_{\alpha}(x) u(x) - \eta_{\alpha}(y) u(y)| &=& |\eta_{\alpha} u(\phi_{\alpha}^{-1}( \phi_{\alpha}(x) )) - \eta_{\alpha} u(\phi_{\alpha}^{-1}( \phi_{\alpha}(y))|  \nonumber \\
&\leq &\left( h_{\alpha}( \phi_{\alpha}(x) ) + h_{\alpha}(\phi_{\alpha}(y)) \right) |\phi_{\alpha}(x) - \phi_{\alpha} (y)|  \nonumber \\
& \leq &\left( \tilde{h}_{\alpha}(x) + \tilde{h}_{\alpha}(y) \right) d_g(x,y).
\end{eqnarray}

Our next goal is to prove that
 \[
 h:= \sum_{\alpha \in A} \tilde{h}_{\alpha} \mathbf{1}_{U_{\alpha}}
\]
is a local Haj\l{}asz gradient for $u$. \\ 

Fix $z\in X$ and define
\begin{itemize}
\item $I_z := \{ \alpha \in A: z \in U_{\alpha}\},$
\item $ J_z :=\{ \alpha \in A : z \in \partial U_{\alpha}\},$ and
\item $K_z :=\{ \alpha \in A: z \in X \thicksim \bar{U}_{\alpha}\}.$
\end{itemize}
Then $I_z, J_z, K_z$ are pairwise disjoint, with $I_z \cup J_z\cup K_z= A$ for each $z$ in $X$.\\

Define $R > 0$ such that
\begin{itemize}
\item For all  $\alpha$ in $I_z$, the ball $B_R(z) \subset U_{\alpha}$,
\item  For all  $\alpha$ in $J_z,   \eta_{\alpha} (B_R(z))=\{0\}$, and
\item For all $\alpha$ in $K_z,  B_R(z) \cap U_{\alpha} = \emptyset.$
\end{itemize}

Note that if $x,y \in B_R(z)$ and $\eta_{\alpha}(x) \neq 0$, then $y \in U_{\alpha}$; indeed, $\alpha$ can not belong to $K_z \cup J_z$, therefore $\alpha \in I_z$, and then $y \in B_R(z) \subset U_{\alpha}$. Hence for $x, y \in B_R(z)\thicksim
 \bigcup_{\alpha \in A} \phi^{-1}_{\alpha}(E_{\alpha})$, taking (\ref{gradient}) into account, the inequality
\[
\left| u(x) - u(y) \right| \leq \sum_{\alpha \in A} \left| \eta_{\alpha}(x) u (x) - \eta_{\alpha}(y) u (y) \right| \leq \sum_{\alpha \in A} \left(\tilde{h}_{\alpha}(x) + \tilde{h}_{\alpha}(y) \right) d_g(x,y),
\]
follows, and this proves that $h$ is a local Haj\l{}asz gradient for $u$. \\

Recalling  (\ref{local-norm}), collect previous  estimates to infer
\begin{eqnarray*}
\| u \|_{m^p_1(X, V_g)}  & \leq& \sum_{\alpha \in A} \| \eta_{\alpha}  u \|_{L^p(X,V_g)} + \| h \|_{L^p(X,V_g)} \\
&\leq&    2^{\frac{n}{2p}} \sum_{\alpha \in A}  \| (\eta_{\alpha} u) \circ \phi_{\alpha}^{-1} \|_{L^p(\mathbb{R}^n,\mathcal{L}^n)} + 2^{\frac{n}{2p}} \sum_{\alpha \in A} \|h_{\alpha}\|_{L^p(\mathbb{R}^n,\mathcal{L}^n)} \\
& \leq& 2^{\frac{n}{2p}}|A| C \left( 2^{\frac{n+p}{2p}}C_0 + 2^{\frac{n}{2p}} \right) \| u \|_{L^p_1(X,V_g)} +2^{\frac{n}{2p}}|A|\epsilon ,
\end{eqnarray*}
where $|A|$ denotes the cardinality of the set $A$. Hence if $ \epsilon \rightarrow 0 $ 
\begin{equation}   \label{ineq-m-M}
\| u \|_{m^p_1(X,V_g)} \leq C_1 \| u \|_{L^p_1(X,V_g)},
\end{equation}
where $C_1 := 2^{\frac{n}{2p}} |A| C \left( 2^{\frac{n+p}{2p}}C_0 + 2^{\frac{n}{2q}} \right)$.\\

{\bf Step 2.} Choose $u$ in $L^p_1(X, V_g)$. By the density of $C^{\infty}(X)$ in $L^p_1(X, V_g)$ there exists a sequence of smooth 
functions $\{u_n\}$ converging to $u$ in $L^p_1(X,V_g)$. Therefore using (\ref{ineq-m-M}) for every $\epsilon >0$ there exists some $N$ such that for $m,n \geq N$ 
\[
\| u_m - u_n \|_{m^{1,p}(X, V_g)} \leq C_1\| u_m - u_n \|_{L^p_1(X,V_g)} \leq \epsilon.
\]
On the other hand, by the completeness of  $m^p_1(X,V_g)$ the sequence $\{u_n\}$ converges to some $v$  in $m^p_1(X,V_g)$.
By the definitions of $L^p_1(X,V_g)$ and $m^p_1(X,V_g)$ the sequence $\{u_n\}$ converges to both $u$ and $v$ in $L^p(X,V_g)$: Thus $u = v$, and using (\ref{ineq-m-M})
\[
 \| u \|_{m^p_1(X,V_g)} \leq C_1 \| u \|_{L^p_1(X,V_g)}, 
\]
therefore $L^p_1(X,V_g)  \hookrightarrow  m^p_1(X,V_g)$. \\

Finally, by Corollary 3.5 from \cite{JSYY} the spaces $m^p_1(X,V_g)$ and $M^p_1(X,V_g)$ are equivalent, hence
\[
L^p_1(X,V_g) \hookrightarrow M^p_1(X,V_g), 
\]
as required.
\end{proof}

%%%%%%%%%%%%%%%%%%%%%%%%%%%%%%%%%%%%%%%%%%%%%%%%%

\subsection{Theorem \ref{main2} for flows} \label{final}

We use Theorem \ref{riem-mm} to apply Theorem \ref{main2} in closed Riemannian manifolds when the  $H$-orbits  have dimension one. In this setup we see that Theorem \ref{main2} can be applied if and only if the Euler characteristic of the manifold is equal to zero; this condition is restrictive only in even  dimensional manifolds.  \\

Consider a closed orientable Riemannian $n$-manifold $(X,g)$ whose Euler characteristic $\chi(X)$ is equal to $0$. A result attributed to H. Hopf, see \cite{milnor}, ensures that there exists a non-vanishing\footnote{Non-vanishing, or vanishing no-where.} vector field  
$\tau$ on $X$, or equivalently a non-vanishing $(n-1)$-form
$\omega_{\tau}$, related with $\tau$ through the bijection 
$TX \longleftrightarrow \wedge^{n-1} TX$  given by 
\[  \tau \leftrightarrow \omega_{\tau} = \tau \lrcorner\ \Omega_g ,
\]
where $\Omega_g$ is the volume $n$-form induced from $g$ giving the orientation of $X$.
The form $\omega_{\tau}$ is closed if and only if the vector field $\tau$ is free of divergence; indeed:
\[
( \text{div} \cdot \tau )\  \Omega_g = L_{\tau} \Omega_g = d (\tau \lrcorner\ \Omega_g),  
\]
where $L_{\tau}$ is the Lie derivative along $\tau$.
Denote by $H= \{ h_t : t \in \mathbb{R}\}$  the subgroup
of  $\text{Diff}(X)$ associated to the flow of $\tau$: if 
$\omega_{\tau}$ is a non-vanishing closed $(n-1)$-form on $X$, then  $H$ is a subgroup of $\text{Diff}_{V_g}(X)$, and the orbit of every point in $X$ under $H$ is uncountable. \\

In this spirit, D. Asimov proved in \cite{Asimov} that if $n$ is at least $4$,  and if the first Betti number of $X$ is different from zero, then every non-vanishing vector field is homotopic through a family of non-vanishing vector fields to a non-vanishing vector field that preserves $\Omega_g$, see also \cite{sullivan}. Shortly afterwards, M. Gromov using Convex Integration\footnote{See \cite{eliashberg} for a detailed exposition.} proved that if  $n$ is at least $3$, then every
non-vanishing $(n-1)$-form can be homotoped through non-vanishing forms to a non-vanishing exact form, with no restrictions on the first Betti number of $X$. Note that when $n=2$ the only possible manifold is the $2$-torus, and then the required vector fields are constant slope fields \cite{Asimov}. \\

With those facts, Theorem \ref{riem-mm} and Corollary \ref{corollary} provide  simple and concrete applications:

\begin{corol} Suppose $(X,g)$ is an orientable closed Riemannian manifold
with $\chi(X)=0$. If $\tau$ is a non-vanishing solenoidal vector field,
the problem
\[
\text{Min}\ \{\ \int_X\ \Big( | \nabla u |_g^p + |u|^p \Big)\ d V_g\ :\     
 u \in L^p_{1,H}(X,V_g)\ \text{and}\ \int_{X}\ |u|^{p^{\ast}}\ dV_g = 1\ \}
\]
has a solution whenever $1 < p < n$, where $H$ is the group associated to the flow of $\tau$.
\end{corol}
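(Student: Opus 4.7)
The plan is to verify the hypotheses of Theorem \ref{main2} (equivalently, Corollary \ref{corollary}) for the metric-measure space $(X, d_g, V_g)$ and the group $H$, and then run the direct method on the constrained minimization. The space $(X, d_g, V_g)$ is compact, Ahlfors lower $n$-regular, and $(X, d_g)$ is doubling, so Theorem \ref{main2} applies with $s = n$, and the range $1 < p < n$ matches the required $1 < p < s$. By Theorem \ref{riem-mm}, $L^p_1(X, V_g)$ and $M^p_1(X, V_g)$ coincide with equivalent norms for $1 < p < \infty$, so $M^p_1(X, V_g)$ inherits reflexivity from the classical $L^p_1(X, V_g)$, and $L^p_{1,H}(X, V_g) = M^p_{1,H}(X, V_g)$ as Banach spaces (up to norm equivalence).

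Second, I would verify that $H$ meets the conditions of Theorem \ref{main2}. Since $\tau$ is solenoidal, one has $L_\tau \Omega_g = d(\tau \lrcorner\ \Omega_g) = 0$, so the flow of $\tau$ preserves $\Omega_g$ and hence the measure $V_g$; thus $H \subset \text{Diff}_{V_g}(X) \subset \text{Aut}_{V_g}(X)$. Since $\tau$ is non-vanishing, for every $x \in X$ the orbit $H(x)$ contains a non-degenerate smooth arc through $x$, and is therefore uncountable.

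Third, with all hypotheses in place, I would run the direct method. Take a minimizing sequence $\{u_n\} \subset L^p_{1,H}(X, V_g)$ with $\int_X |u_n|^{p^\ast}\, dV_g = 1$ and $\int_X (|\nabla u_n|_g^p + |u_n|^p)\, dV_g \to D$, where $D$ is the infimum. Since this sequence is bounded in $L^p_{1,H}(X, V_g) = M^p_{1,H}(X, V_g)$, reflexivity yields a subsequence converging weakly to some $u$ in $M^p_{1,H}(X, V_g)$; Theorem \ref{main2} then gives $u_n \to u$ strongly in $L^{p^\ast}(X, V_g)$, so $\|u\|_{L^{p^\ast}} = 1$ and in particular $u \not\equiv 0$. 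Weak lower semicontinuity of the convex coercive functional $u \mapsto \int_X (|\nabla u|_g^p + |u|^p)\, dV_g$ on $L^p_1(X, V_g)$ then yields $\int_X (|\nabla u|_g^p + |u|^p)\, dV_g \leq D$, so $u$ attains the infimum. The only substantive point is the availability of the compact embedding $M^p_{1,H}(X, V_g) \hookrightarrow \hookrightarrow L^{p^\ast}(X, V_g)$ at the critical exponent, which is precisely Theorem \ref{main2}; verifying $H$-invariance of the weak limit is routine since composition with $h \in H$ is continuous in the weak topology, and the identification of weak topologies between $L^p_1$ and $M^p_1$ via Theorem \ref{riem-mm} (a routine consequence of norm equivalence on reflexive Banach spaces) is what allows the two frameworks to be used interchangeably in the same argument.
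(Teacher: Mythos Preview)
Your proposal is correct and follows essentially the same route as the paper: the paper verifies the hypotheses on $(X,d_g,V_g)$ and on $H$ in the discussion preceding the corollary, then invokes Theorem~\ref{riem-mm} and Corollary~\ref{corollary}, whose proof is exactly the direct method you spell out (minimizing sequence, weak compactness via reflexivity, strong $L^{p^*}$ convergence from Theorem~\ref{main2}, weak lower semicontinuity). The one cosmetic difference is that Corollary~\ref{corollary} is stated for the $M^p_1$ norm rather than $\int_X(|\nabla u|_g^p+|u|^p)\,dV_g$, so your explicit adaptation of the direct method to the latter functional is arguably more precise than the paper's bare citation.
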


%%%%%%%%%%%%%%%%%%%%%%%%%%%%%%

\end{document}